%% file: momenMatch.tex
\documentclass[preprint,3p]{elsarticle}
\usepackage{stmaryrd}
\SetSymbolFont{stmry}{bold}{U}{stmry}{m}{n}
\usepackage{amsmath,bbm,mathbbol,mathtools}
\usepackage{amssymb,amsfonts,amsthm}
\usepackage{tabularx,lmodern}
\usepackage{color,soul,enumitem}
\usepackage{epsfig,epstopdf,color}
\usepackage{hyperref,graphicx}
\usepackage{algorithm,algorithmic,multirow}
\usepackage{flushend}
\usepackage{verbatim}
\usepackage{caption}
\usepackage{graphicx, subfig}
\usepackage{arydshln}
\usepackage[level]{datetime}
\usepackage{float}
\usepackage{booktabs}
\usepackage{tabularx}
\usepackage{natbib}
\usepackage{relsize}
\usepackage{ulem}
\usepackage{setspace}
\usepackage{epstopdf}
\usepackage{appendix}
\usepackage{bm}
\usepackage{adjustbox}
\usepackage{array}
\usepackage{empheq}

\DeclareMathOperator\supp{supp}

\include{command}

\begin{document}

\title{Fast convolution solvers using moment-matching
}

\author[eit]{Xin Liu}
\ead{xinliu@eitech.edu.cn}

\author[scu]{Qinglin Tang}
\ead{qinglin\_tang@scu.edu.cn}

\author[tju1,tju2]{Yong Zhang\corref{5}}
\ead{Zhang\_Yong@tju.edu.cn}

\address[eit]{Eastern Institute for Advanced Study, Eastern Institute of Technology, Ningbo,  China, 315200,}
\address[scu]{School of Mathematics, Sichuan University, ChengDu, China, 610064,}
\address[tju1]{Center for Applied Mathematics and KL-AAGDM, Tianjin University, Tianjin, China, 300072,}
\address[tju2]{State Key Laboratory of Synthetic Biology,   Tianjin University, Tianjin, China, 300072.}

\cortext[5]{Corresponding author.}

\begin{abstract}
We propose two easy-to-implement fast algorithms based on  moment-matching to compute the nonlocal potential
$\varphi(\bx)=(U\ast \rho)(\bx)$ on bounded domain,
where the kernel $U$ is singular at the origin and the density $\rho$ is a fast-decaying smooth function.
Each method requires merely minor modifications to commonly-used existing methods, i.e., the sine spectral/Fourier quadrature method,
and achieves a much better convergence rate.
The key lies in the introduction of a smooth auxiliary function $\rho_1$ whose moments match those of the density up to an integer order $m$.
Specifically, $\rho_1$ is constructed using Gaussian function in an explicit way
and the associated potential can be calculated analytically.
The moments of residual density vanish up to order $m$,
and the corresponding residual potential $U \ast (\rho-\rho_1)$ decays much faster
than the original potential $\varphi$ at the far field.
As for the residual potential evaluation,
for classical kernels (e.g., the Coulomb kernel),
we solve a differential/pseudo-differential equation on a rectangular domain with homogeneous Dirichlet boundary conditions via sine pseudospectral method,
and achieve an arbitrary high convergence rate.
While, for general kernels, the regularity of Fourier integrand increase by $m$ thanks to the moments-vanishing property,
therefore, the standard trapezoidal rule/midpoint quadrature also converges much faster.
To gain a better numerical performance, we utilize the domain expansion technique to obtain better accuracy,
and improve the efficiency by simplifying the quadrature into one discrete convolution and applying Fast Fourier Transform (FFT)
to a double-sized vector.
Rigorous error estimates and extensive numerical investigations showcase the accuracy and efficiency for different nonlocal potentials.

\end{abstract}

\begin{keyword}
convolution integral, moment-matching, domain expansion, tensor acceleration, sine pseudospectral method, error estimates
\end{keyword}

\maketitle


\section{Introduction}
 In this paper,
we focuses on the accurate and efficient evaluation of convolution-type nonlocal potentials
\be\label{convolution}
\varphi(\bx) = (U \ast \rho)(\bx) = \int_{\mathbb{R}^d} U(\bx-\by) \rho(\by) {\rm d} \by,\quad  \bx \in \mathbb{R}^d,
\ee
where $ d $ is the spatial dimension,
the density $\rho(\bx)$ is a fast-decaying smooth function and the kernel function $U(\bx)$ is usually singular at the origin.
 This is a vital
and prominent area of research in the science and engineering community.

The convolution \eqref{convolution} can be represented equivalently as a Fourier integral
\bea
\varphi(\bx)=\frac{1}{(2 \pi)^d} \int_{\mathbb{R}^d} \widehat{U}(\bk)  \widehat{\rho}(\bk) e^{i \bk \cdot \bx}{\rm d}\bk,
\label{FourierIntegral}
\eea
 where
$\widehat{f}(\bk)= \int_{\mathbb{R}^d} f(\bx) e^{-i \bk \cdot \bx} {\rm d} \bx$ is the Fourier transform of function $f(\bx)$.
The Fourier transform of density $\widehat{\rho}(\bk)$ is smooth and decays fast at the far field.
The Fourier transform of the kernel $\widehat{U}(\bk)$ is singular too,
and, sometimes, the singularity is too strong such that the above integral is not well-defined for general density.
A widely-used approach, named as \textit{Fourier spectral method} (FS) \cite{FourierMethod} hereafter,
discretizes the integral using trapezoidal rule and the resulting quadrature is implemented by Fast Fourier Transform (FFT).
Naive ignorance or improper treatment of the singularity shall render severe accuracy lost \cite{FourierMethod} or
reach a saturation, which is also named as ``numerical locking phenomenon'' in \cite{NUFFTCoulomb}.

As far as we know, several efficient and accurate algorithms have been proposed to deal with the singularity, such as
the NonUniform Fast Fourier Transform method (NUFFT)-based algorithm \cite{NUFFT}, Gaussian-Summation method (GauSum) \cite{GauSum, fastGausstran},
Kernel Truncation Method (KTM) \cite{FastConvGreengard, Liu2022optimal, CoulombCutoff},
Anisotropic Truncated Kernel Method (ATKM) \cite{atkm} and the newly-developed far-field smooth approximation (FSA)-based method \cite{FSA}.
All these methods focus on the integral form \eqref{convolution} or
\eqref{FourierIntegral}, and achieve spectral accuracy while maintaining FFT-like efficiency.

Besides the integral representation, one common way is to solve a partial differential or fractional differential equation, such as
the 3D Poisson and 2D Coulomb potential \cite{BaoBECRev,DSTMethod,DimRdct}, on bounded domain with appropriate boundary conditions.
For example, to solve the 3D Poisson equation with sine pseudospectral (SP) method \cite{DSTMethod}, we first
truncate the whole space into a bounded rectangular domain $\Omega := [-L,L]^d$ and impose homogeneous Dirichlet boundary conditions.
Then we apply sine pseudospectral method and utilize the discrete sine transform to help achieve great efficiency.
Unfortunately, the numerical accuracy cease to improve but become saturated instead as the mesh size tends smaller on a fixed computational domain.
The saturated accuracy is dominated by errors caused by boundary condition approximation,
which is determined by the asymptotics of the potential $\varphi$ at the far field,
and is only first-order with respect to the inverse of domain size in this case.

Despite the ``numerical locking'', the SP method gains great popularity in physics community due to its simplicity and implementation easiness.
The ``locked'' accuracy of the numerical solution will become smaller once the boundary condition approximation gets more accurate.
In this paper, we introduce an auxiliary function to neutralize the density by moment-matching technique.
The associated potential can be computed analytically and the residual potential decays much faster to zero at the far field.
Applying SP to solve PDE with homogeneous Dirichlet boundary conditions for the residual potential will alleviate such locking phenomenon
or even eliminate it up to a given high precision.

To be specific, if the density satisfies the following vanishing moment conditions
\begin{equation} \label{MomVan1}
  \int_{\mathbb{R}^d} \rho(\bx)\, \bx^{\bm{\alpha}} \rm{d} \bx =0, ~~ \left| \bm{\alpha}  \right|=0, 1, \dots, m, \quad m \in \mathbb{N},
\end{equation}
where $\bm{\alpha}=(\alpha_1, \dots, \alpha_d) \in \mathbb{N}^d$ is a multi-index,
$\bx^{\bm{\alpha}}= \prod\limits_{i=1}^{d} x_i^{\alpha_i} $ and $\left|\bm{\alpha}\right|=\sum\limits_{i=1}^{d} \alpha_i $,
using standard asymptotic analysis, we can prove that the potential decays as fast as
\bea
\label{FarfieldBeha}
|\varphi(\bx)| ~~~~ \sim ~~ \sum_{|\bm{\alpha}| = m+1} |\partial^{\bm{\alpha}} U(\bx)|, \qquad \quad  |\bx| \rightarrow \infty,
\eea where $\partial^{\bm{\alpha}} U = \partial_{x_1}^{\alpha_1} \cdots \partial_{x_d}^{\alpha_d}~ U$.
Take the 3D Poisson for example, the potential $\varphi$ decays as fast as $1/|\bx|^{m+2}$ at the far field, therefore,
the optimal accuracy achieved by SP method is polynomially small as $L^{-(m+2)}$.

\vspace{0.5em}

Most of the important potentials can be reformulated as PDE or fractional PDE
 \bea
\label{PDEVersion}
\mathcal{L} \varphi(\bx) = \rho(\bx), \quad \bx \in \mathbb{R}^d,
\eea
where $\mathcal{L}$ is a linear differential operator, for example,
$\mathcal{L} = -\Delta $ (2D/3D Poisson)
or $\sqrt{-\Delta}$ (2D Coulomb). A more detailed list on linear operators of convolutions is presented in Table \ref{tab:DiffAsyKernel}.
For this class of nonlocal potentials,
we introduce an auxiliary function $\rho_1(\bx)$ such that its associated potential $\varphi_1 := U \ast \rho_1$ can be computed analytically
and its moments match those of the density up to order $m$, i.e.,
  \begin{equation}
  \label{MM-condition}
  \int_{\mathbb{R}^d} \rho_1(\bx) \bx^{\bm{\alpha}} \rm{d} \bx =
  \int_{\mathbb{R}^d} \rho(\bx) \bx^{\bm{\alpha}} \rm{d} \bx
  , ~~ \left| \bm{\alpha}  \right|=0, 1, \dots, m.
\end{equation}
The residual density $\rho_2:= \rho-\rho_1$ automatically satisfies the vanishing  moment conditions
\eqref{MomVan1} and its resulting potential $\varphi_2: = U \ast \rho_2$ is readily solved by SP method.

The construction of auxiliary function turns into the core problem and is of essential importance.
So far as we know, there are not so many literature focusing on such topic \cite{ExpandDomain,NUFFTCoulomb}.
In \cite{ExpandDomain}, the author proposed compact polynomial mollifiers as the auxiliary function to compute the  Poisson potential in the whole space.
Such a polynomial-type function is non-smooth, so is the residual density.
On the one hand, the convergence rate of SP method, when applied to solve $\varphi_2$,
is limited by the regularity of $\rho_2$,
thus, it requires a small spatial mesh size $h$ to achieve high precision. On the other hand,
the discretization of Fourier integral should be carried out on a relatively large domain $[-\pi/h, \pi/h]^d$,
because the Fourier integrand $\widehat{U}(\bk) \widehat{\rho_2}(\bk) e^{i\bk\cdot \bx}$ decays only polynomially fast
at the far field.
Therefore, accurate evaluation of residual potential $\varphi_2$ requires a very small mesh size $h$
and shall eventually suffer from a severe efficiency degradation. It is more natural to adopt smooth auxiliary function
so to overcome such difficulty.
In fact, Bao et al. \cite{NUFFTCoulomb} utilized Gaussian and its first-order derivative to construct
a smooth auxiliary function successfully for the 2D Poisson potential.
Based on these facts, we propose one auxiliary function as a linear combination of Gaussian and its derivatives up to order $m$, i.e.,
\bea
\rho_1(\bx) =\sum\limits_{|\bm{\alpha}|=0}^m \gamma_{\bm{\alpha}} \frac{\partial^{\bm{\alpha}} G(\bx)}{\partial \bx^{\bm{\alpha}}}, \quad \text{with} \quad G(\bx)=\frac{1}{(2 \pi \sigma^2)^{d/2}} e^{-\frac{|\bx|^2}{2\sigma^2}}.
\label{AuxilFunc}
\eea
The coefficients $\gamma_{\bm{\alpha}}$ are determined by the moment-matching conditions  \eqref{MM-condition}
and are computed analytically by solving a linear system.
With the above Gaussian-type auxiliary function, the analytical computation of $\varphi_1$ is quite feasible for the common kernels,
including Poisson, Coulomb and Biharmonic kernels.
While for the general kernels, one may resort to high-order numerical integration, for example, the Gauss-Kronrod quadrature.
We skip details here for a more smooth presentation
and refer the readers to subsection \ref{subsec:AuxiFunc}.

Apart from the above common potentials,
there exists another large class of potentials that can not be written as a PDE or fractional PDE \eqref{PDEVersion} but
given by Fourier integrals \eqref{FourierIntegral}.
As stated before, with a simple computation, we know that the first $m$-th order Taylor expansion of $\widehat{\rho_2}(\bk)$ at $\bk=0 $ vanish, that is,
 \bea
 \label{TayExp}
    \frac{\partial^{\bm{\alpha}} \widehat{\rho_2}(\bk)}{\partial \bk^{\bm{\alpha}}} \bigg |_{\bk = 0} = \int_{\mathbb{R}^d}  \rho_2(\bx) \bx^{\bm{\alpha}} \rm{d} \bx = 0,
    \eea
 thus the regularity of Fourier integrand $\widehat{U}(\bk) \widehat{\rho_2}(\bk) e^{\im \bk \cdot \bx}$ gets elevated by order $m$.
 As a result, the accuracy of quadrature, such as the trapezoidal or midpoint rule, for the Fourier integral is  improved to
 $\mathcal{O}((\Delta k)^p)$, where $\Delta k$ is the mesh size
 in Fourier space and the integer $p$ depends on the moment-matching order $m$ and kernel $U$.
 Clearly, the numerical accuracy can be improved by increasing the convergence order $p$ or
 decreasing the mesh size $\Delta k  = \pi/L$, equivalently, increasing the domain size $L$.

In this article, we derive rigorous error analysis for both methods, and prove that the convergence order $p$ increases
as $m$ gets larger.
For example, for the 3D Poisson potential, the saturated accuracy achieved with SP method scales as $\mathcal{O}(L^{-1})$,
while the optimal accuracy with our proposed method is improved to $\mathcal{O}(L^{-(m+2)})$.
Most notably, when applied to the 2D Poisson potential, the SP method is \textit{not effective at all }
because the potential grows logarithmically fast \cite{ArtifiBoundCond},
while, the optimal accuracy achieved with our method is as small as $\mathcal{O}(L^{-(m+1)})$.

For a given integer $m$, to further improve the accuracy, we adopt the domain expansion \cite{ExpandDomain} and denote the so-called \textit{domain expansion factor} by $S$.
The memory costs and the computational complexity are $\mathcal{O}((SN)^d)$ and $\mathcal{O}((SN)^d \log (SN)^d)$ respectively, where
$N$ is the number of grids in each spatial dimension.  In a companion paper \cite{RecursiveExpandDomain},
Anderson  proposed a recursive expanding domain method to
help reduce the memory costs down to $\mathcal{O}(S^{d-1} N^{d})$ and the complexity, which becomes more favorable with $S$ as
$$\mathcal{O}((S^{d-1} N^d) \log (S^{d-1} N^d)).$$
Clearly, a large $S$ helps improve the accuracy at the sacrifice of efficiency.

Actually, similar to ATKM \cite{atkm} and FSA \cite{FSA}, both methods proposed here can be
simplified to a discrete convolution with tensor explicitly given by  Eqn. \eqref{TensorMidFou} or Eqn. \eqref{TensorHomDirSin}.
Once the tensor is computed out \textit{once for all} in the precomputation step, the essential execution involves merely a pair of FFT/iFFT of a double-sized density.
The computational complexity becomes independent of  $S$ and is greatly reduced to
$$\mathcal{O}((2N)^d \log (2N)^d).$$ Such tensor acceleration technique greatly improves the efficiency, especially when the potentials are frequently evaluated with the same numerical setup, such as in computing the ground state and dynamics of the nonlocal Schr\"{o}dinger equation \cite{BaoBECRev, TrappedAtomic_dipole}.

\vspace{0.5em}

The paper is organized as follows. In Section \ref{sec:FastSolvers},
 we present two simple and high-order fast solvers for nonlocal potential evaluation,
 along with tensor-based acceleration.   Error estimates are provided in Section  \ref{sec:ErrEst}.
  In Section \ref{sec:NumRes},
  extensive numerical investigations are
conducted  to demonstrate the accuracy and efficiency for various kernels. Finally, some conclusions are drawn in Section \ref{Conclusion}.

\section{Fast convolution  solvers} \label{sec:FastSolvers}
In this section, we develop fast convolution solvers
using moment-matching technique for the density.
The key observation is that the vanishing of the moments of $\rho$
up to order $m$ provides the following advantages:
 \begin{equation}
 \label{AdvanMomVan}
\begin{aligned}
&(a)~\mbox{The potential generated by $\rho$ decays faster at the far field}. \\[0.3em]
&(b)~\mbox{The regularity of integrand in Fourier integral}~ \eqref{FourierIntegral} ~ \mbox{gets elevated}.
\end{aligned}
\end{equation}

Advantage (a) follows from standard asymptotic analysis of the potential at the far-field, with details provided in Lemma~\ref{lem:farAsyAna} below.
 This implies that the homogeneous Dirichlet boundary condition approximation becomes more accurate, and it is then very convenient to apply sine pseudospectral method.
 Advantage (b) follows from Eqn. \eqref{TayExp}, which ensures that the discretization error in evaluating \eqref{FourierIntegral} decays more rapidly with respect to the mesh size in Fourier space.

Based on the above facts, we introduce an auxiliary function $\rho_1$, whose potential can be
computed with ease, such that the moments of  $\rho_2 = \rho-\rho_1$ vanish up to order $m$.
For clarity, we provide a simple procedure for solving $\varphi = U \ast \rho$ consisting of the following steps:

\floatname{algorithm}{Procedure}
\begin{algorithm}
\renewcommand{\thealgorithm}{}
\caption{ Moment-matching method for potentials $\varphi(\bx)= (U \ast \rho)(\bx)$. }
\label{ProModDST}
\begin{algorithmic}[1]
\STATE Create a auxiliary function $\rho_1(\bx) $ so that the moments of $\rho_2(\bx)=\rho(\bx)-\rho_1(\bx)$ vanish up to order $m$:
\bea
\int_{\mathbb{R}^d} \rho_2(\bx) \bx^{\bm{\alpha}} \rm{d} \bx =0, ~~ \left| \bm{\alpha}  \right|=0, 1, \dots, m.
\label{moment}
\eea
\\[0.3em]
\STATE  Compute $\varphi_1(\bx) = (U \ast \rho_1)(\bx)$ analytically. \\[0.3em]
\STATE Compute  $\varphi_2(\bx) = (U \ast \rho_2)(\bx)$
 using the SP method or FS method.\\[0.3em]
\STATE  Set $\varphi = \varphi_1 + \varphi_2$ .
\end{algorithmic}
\end{algorithm}

\vspace{0.5em}

\begin{lem}[\textbf{Far-field asymptotic analysis}]
\label{lem:farAsyAna}
For a smooth and  numerical compactly supported function $\rho(\bx)$, the asymptotic behavior of the nonlocal potential at the far field reads as follows
 \bea
\label{Asymptotic}
| \varphi(\bx)| ~~~~\sim ~~ \sum_{|\bm{\alpha}| = m+1}|\partial^{\bm{\alpha}} U(\bx)|, \qquad \mbox{ as }\quad |\bx| \rightarrow \infty,
\eea
if the moments of $\rho(\bx)$ vanish up to order $m$, i.e.,
\begin{equation} \label{MomVan}
  \int_{\mathbb{R}^d} \rho(\bx) \bx^{\bm{\alpha}} \rm{d} \bx =0, ~~ \left| \bm{\alpha}  \right|=0, 1, \dots, m.
\end{equation}
\end{lem}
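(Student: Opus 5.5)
The approach is a multipole-type Taylor expansion of the kernel inside the convolution integral. Throughout, $\rho$ is treated as effectively supported in a ball $B_R=\{|\by|\le R\}$; the exponentially small tail outside $B_R$ is handled at the end. For $|\bx|\ge 2R$ and $\by\in B_R$, the segment between $\bx$ and $\bx-\by$ stays away from the singularity at the origin, so $U$ is smooth there. Taylor's formula with integral remainder to order $m$ then gives
\begin{equation*}
U(\bx-\by)=\sum_{|\bm{\alpha}|\le m}\frac{(-\by)^{\bm{\alpha}}}{\bm{\alpha}!}\partial^{\bm{\alpha}}U(\bx)
+\sum_{|\bm{\alpha}|=m+1}\frac{(m+1)(-\by)^{\bm{\alpha}}}{\bm{\alpha}!}\int_0^1(1-t)^m\,\partial^{\bm{\alpha}}U(\bx-t\by)\,{\rm d}t .
\end{equation*}
Insert this into $\varphi(\bx)=\int U(\bx-\by)\rho(\by)\,{\rm d}\by$. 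Every term with $|\bm{\alpha}|\le m$ is multiplied by a moment of $\rho$, and these vanish by \eqref{MomVan}. Only the order-$(m+1)$ remainder survives.

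Next, compare the remainder with $\partial^{\bm{\alpha}}U(\bx)$. For the kernels considered (Table \ref{tab:DiffAsyKernel}: Poisson, Coulomb, biharmonic, and other homogeneous or log-type kernels), $\partial^{\bm{\alpha}}U$ is homogeneous of some degree $-s$, or is asymptotically so. In the 2D log case, differentiating at least once removes the log and leaves homogeneous functions. Hence for $|\by|\le R\le |\bx|/2$ and $t\in[0,1]$,
\begin{equation*}
\partial^{\bm{\alpha}}U(\bx-t\by)=\partial^{\bm{\alpha}}U(\bx)+\mathcal{O}\bigl(|\by|\,|\bx|^{-s-1}\bigr).
\end{equation*}
This yields
\begin{equation*}
\varphi(\bx)=\sum_{|\bm{\alpha}|=m+1}\frac{(-1)^{m+1}}{\bm{\alpha}!}\Bigl(\int\rho(\by)\by^{\bm{\alpha}}\,{\rm d}\by\Bigr)\partial^{\bm{\alpha}}U(\bx)+\mathcal{O}\bigl(|\bx|^{-s-1}\bigr).
\end{equation*}
Taking absolute values gives the upper bound $|\varphi(\bx)|\lesssim\sum_{|\bm{\alpha}|=m+1}|\partial^{\bm{\alpha}}U(\bx)|$, which is the content of \eqref{Asymptotic} in the sense of orders of magnitude. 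Equality of orders holds whenever the $(m+1)$-th moments do not all vanish in a way that cancels the leading term.

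The tail of $\rho$ outside $B_R$ is the remaining point. If $\rho$ is genuinely compactly supported, nothing is left. For a \emph{numerically} compactly supported $\rho$, i.e. one decaying faster than any polynomial, take $R=|\bx|/2$ in the argument above. The contribution of $\{|\by|>|\bx|/2\}$, including the region near the singularity $\by\approx\bx$, is bounded by $\sup_{|\by|>|\bx|/2}|\rho(\by)|(1+|\by|)^{K}$ times a locally integrable norm of $U$, which is $\mathcal{O}(|\bx|^{-K})$ for every $K$. The moments taken over $B_R$ differ from the full vanishing moments by equally small amounts, so these errors are negligible against $|\bx|^{-s}$.

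The main obstacle is the uniform comparison $\partial^{\bm{\alpha}}U(\bx-t\by)\approx\partial^{\bm{\alpha}}U(\bx)$, which requires knowing the structure of $U$ at infinity. I would state and use homogeneity (or asymptotic homogeneity) of the derivatives of $U$ as the working assumption, in line with the kernels of Table \ref{tab:DiffAsyKernel}. Lower bounds, i.e. a genuine ``$\sim$'', only hold generically, so I would present the result as the sharp decay order rather than a two-sided estimate at every point.
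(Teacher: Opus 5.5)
Your proposal is correct and follows essentially the same route as the paper: you Taylor-expand $U(\bx-\by)$ to order $m$ with integral remainder, cancel all terms of order $\le m$ using the vanishing moments, and bound the surviving order-$(m+1)$ remainder by $\partial^{\bm{\beta}}U$ evaluated near $\bx$, which, as in the paper, gives only the upper bound behind ``$\sim$''. Your homogeneity comparison of $\partial^{\bm{\beta}}U(\bx-t\by)$ with $\partial^{\bm{\beta}}U(\bx)$, and your tail estimate for a merely rapidly decaying $\rho$, spell out two things the paper asserts without proof: the final ``$\sim$'' step and the ``without loss of generality compactly supported'' reduction.
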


\begin{proof}
Without loss of generality, we assume that $\supp \{\rho\} \subsetneq  \Omega_0$, and define \(\Omega\) as an expanded domain, i.e., $\Omega_0 \subsetneq  \Omega$.
Using  the Taylor formula with integral remainder for kernel $U$, we have
\bea
\label{KernelTaylor}
U(\bx-\by)= \sum_{k=0}^{m} \sum_{|\bm{\alpha}|=k} \frac{1}{\bm{\alpha} !}(-1)^{|\bm{\alpha}|} \partial^{\bm{\alpha}} U(\bx)\by^{\bm{\alpha}} + r_m, \quad \bx \in \partial\Omega,~~  \by \in \Omega_0,
\eea
where $\bm{\alpha}=(\alpha_1,  \alpha_2, \dots, \alpha_d) \in \mathbb{N}^d$ is a multi-index, $\left|\bm{\alpha}\right|=\sum\limits_{i=1}^{d} \alpha_i $, $\by^{\bm{\alpha}}= \prod\limits_{i=1}^{d} y_i^{\alpha_i} $,
$\partial^{\bm{\alpha}} U = \partial_{x_1}^{\alpha_1} \cdots \partial_{x_d}^{\alpha_d}~U$, $\bm{\alpha} ! =\alpha_1 ! \cdots \alpha_d !$  and the integral remainder
 \beas
 r_m=(m+1) \sum_{|\bm{\beta}| = m+1}\frac{1}{\bm{\beta} !} (-1)^{m+1} \by^{\bm{\beta}} \int_0^1 \partial^{\bm{\beta}} U(\bx-s \by) (1-s)^m \rm{d}s.
 \eeas
Using Eqn. \eqref{KernelTaylor} and $\supp \{\rho\} \subsetneq  \Omega_0$, for $\bx \in \partial \Omega$, we obtain
\beas
\varphi(\bx)&=& \int_{\mathbb{R}^d} U(\bx-\by) \rho(\by) \rm{d} \by = \int_{\Omega_0} U(\bx-\by) \rho(\by) \rm{d} \by \\
 &=& \sum_{k=0}^{m} \sum_{|\bm{\alpha}|=k} \frac{1}{\bm{\alpha} !}(-1)^{|\bm{\alpha}|} \partial^{\bm{\alpha}} U(\bx) \left[ \int_{\mathbb{R}^d} \rho(\by) \by^{\bm{\alpha}}  {\rm{d} \by} \right] +\int_{\Omega_0} r_m \rho(\by) \rm{d} \by.
\eeas
Using the moment conditions \eqref{MomVan}, we have
\bea
| \varphi(\bx)|&=& \left| \int_{\Omega_0} r_m \rho(\by) \rm{d} \by \right|
 \lesssim \sum_{|\bm{\beta}| = m+1} \max_{\substack{0 \le s \le 1}} \left| \int_{\Omega_0}  \partial^{\bm{\beta}} U(\bx-s \by)\by^{\bm{\beta}} \rho(\by)  \rm{d} \by  \right| \notag\\
 &\lesssim& \sum_{|\bm{\beta}| = m+1} \max_{\substack{ \by \in \Omega_0 \\ 0 \le s \le 1}}  \left| {\partial^{\bm{\beta}} U(\bx-s \by)  }\right|, ~~ \bx \in \partial \Omega,  \label{FarAsymMax}\\
 &\sim & \sum_{|\bm{\beta}| = m+1} |\partial^{\bm{\beta}} U(\bx)|, \qquad |\bx| \rightarrow \infty. \notag
\eea
\end{proof}

 \subsection{Auxiliary function}
 \label{subsec:AuxiFunc}
To ensure the computational viability of
 the proposed procedure, we present a specific scheme for constructing the auxiliary function $\rho_1(\bx)$ and its induced potential $\varphi_1(\bx)$.
 There are various ways to construct such a function, such as compact polynomial mollifiers \cite{ExpandDomain}. However, these mollifiers are not smooth and shall eventually lead to severe efficiency degradation. To overcome this issue, we construct the auxiliary function as a linear combination of smooth Gaussian and its derivatives up to order $m$, as given in \eqref{AuxilFunc}.
The coefficients $\gamma_{\bm{\alpha}}$  are obtained by solving the following linear system
\bea
\sum_{|\bm{\alpha}|=0}^{m} \left[ \int_{\mathbb{R}^d} \frac{\partial^{\bm{\alpha}} G(\bx)}{\partial \bx^{\bm{\alpha}}} \bx^{\bm{\beta}} \rm{d} \bx \right] \gamma_{\bm{\alpha}}= \int_{\mathbb{R}^d} \rho(\bx) \bx^{\bm{\beta}} \rm{d} \bx ~~~~ \text{for}~ |\bm{\beta}|=0,\dots,m.
\label{LineEqu}
\eea
In fact, the coefficient matrix $H_{\bm{\beta}}^{\bm{\alpha}}: = \int_{\mathbb{R}^d} \frac{\partial^{\bm{\alpha}} G(\bx)}{\partial \bx^{\bm{\alpha}}} \bx^{\bm{\beta}} \rm{d} \bx $ is the lower triangle, therefore the solution to equations \eqref{LineEqu} can be obtained analytically via back substitution. A detailed derivation is given in  \ref{coefficient}.

With the coefficients $\gamma_{\bm{\alpha}}$ determined, we can evaluate $ \varphi_1 = U \ast \rho_1 $
as follows
\bea
	\label{FirstExact}
	\varphi_1 = U \ast \rho_1 = U \ast \Big[\sum_{|\bm{\alpha}|=0}^m \gamma_{\bm{\alpha}} ~ \partial^{\bm{\alpha}} G\Big] =  \sum_{|\bm{\alpha}|=0}^m \gamma_{\bm{\alpha}} ~ \partial^{\bm{\alpha}}(U \ast G).
	\eea
An explicit analytical expression for $W: = U \ast G$ can be derived for several classes of potentials, including the 2D/3D Poisson, 2D Coulomb, 3D dipole-dipole interaction (DDI) and 2D/3D biharmonic potentials. Details are provided in ~\ref{anlyticalExpr}.
For general kernels, one may resort to numerical integration to obtain the accurate evaluation of $W$.
Here, we present a numerical scheme to compute the convolution of two radially symmetric functions in 2D, i.e., $U(\bx) = U(r)$ and $G(\bx)  = G(r)$ with $r = |\bx|$, then
\beas
W(\bx) = \int_0^{\infty} U(s) s \rm{d}s \int_0^{2 \pi} G \left(\sqrt{r^2+s^2-2 s r \cos \theta }\right) \rm{d} \theta,
 \eeas
 which can be calculated by adaptive Gauss-Kronrod quadrature.

\vspace{0.5em}

In the following, we present two simple numerical methods to compute $\varphi_2$ based on the properties \eqref{AdvanMomVan} induced by moment-vanishing. The first method employs the SP method to solve the corresponding differential equation subjected to homogeneous Dirichlet boundary conditions on a bound domain.  We refer to this as the $\mathbf{SP\text{-}MM}$ method, with details provided in Subsection \ref{sec:HomDirSin}.
  The second method discretizes the Fourier integral \eqref{FourierIntegral} using midpoint rule, and the quadrature is efficiently implemented via FFT. This is referred to as the $\mathbf{FS\text{-}MM}$ method, with details provided in Subsection \ref{sec:MidFou}.
  Both methods can achieve arbitrary accuracy
  with high efficiency thanks to discrete sine transform (DST) or FFT, and rigorous error estimates are provided in Section \ref{sec:ErrEst}.

 \subsection{SP-MM method} \label{sec:HomDirSin}
In this subsecction, we introduce the SP method
\cite{Shen2011Spectral} to discretize the following linear PDE
\bea
\left\{\ba{ll}
 \mathcal{L} \varphi_2(\bx)  = \rho_2(\bx), \quad \bx \in \Omega, \\[0.8em]
\varphi_2(\bx)\big|_{\bx \in \partial \Omega}=0,
\ea\right.
\label{odeHomDirRho2}
\eea
where the operator $\mathcal{L}$ contains only even-order derivatives.
Without loss of generality, we denote $\Omega = [-L,L]^d:= \bR_{L} $ and discrete it by $N \in 2 \mathbb{Z}^{+}$ equally spaced points in  each direction.  The mesh size is then given by $h = 2L/N $.
For simplicity, we only present the detailed scheme for 1D case. Extension to 2D/3D case is quite straightforward. The spatial grid points are given by $x_j = -L + j h$, $j =1, 2, \dots, N-1$.
Denote the space
\beas
X &=& \{f(x) \in C^{\infty}(\Omega),  ~ f(x) \big|_{x \in \partial \Omega} = 0  \}, \\
Y_{N} &=& \text{span} \{ \sin(\mu_p (x+L)), \quad p =1, 2, \dots, N-1  \},
\eeas
with $\mu_p = \pi p /(2 L)$.
 Let $P_{N}: X ~ \rightarrow Y_{N} $ be the standard project operator
as
\beas
(P_{N} f)(x) =  \sum_{p=1}^{N-1} \hat{f}^{s}_{p} \sin(\mu_p(x+L)) , \quad x \in \Omega,
\eeas
where $\hat{f}^{s}_{p}$, the sine transform coefficients, are defined as
\beas
\hat{f}^{s}_{p} =\frac{1}{L} \int_{-L}^{L} f(x)  \sin(\mu_p(x+L))  {\rm d} x.
\eeas
The above integral is well approximated by applying the trapezoidal rule, which is given explicitly as follows
\bea
\tilde{f}_{p}^{s} = \frac{2}{N} \sum_{j=1}^{N-1}  f(x_j) \sin \left(\frac{\pi j p}{N}\right).
\label{DisSin}
\eea
 Then
 the sine pseudospectral discretization for \eqref{odeHomDirRho2} reads as
\bea
\varphi_2^{N}(x_j) =\sum_{p=1}^{N-1}  \frac{(\tilde{\rho}_2)_{p}^{s}}{\widehat{\mathcal{L}}(\mu_p)} \sin \left(\frac{\pi j p}{N}\right),
\label{SineSpec}
\eea
where $(\tilde{\rho}_2)_{p}^{s}$,  the discrete sine transform coefficients, are defined as \eqref{DisSin}, and
$\widehat{\mathcal{L}}(\mu_p)$ represent the multiplication operator in Fourier space corresponding to the differential operator $\mathcal{L}$ in physical space, which is given by
\begin{equation*}
   \widehat{\mathcal{L}}(\mu_p) =  \langle \mathcal{L} e_p(x), e_p(x) \rangle / \langle  e_p(x), e_p(x) \rangle,
\end{equation*}
where $e_p(x) := \sin(\mu_p(x+L))$. For example, $\widehat{\mathcal{L}}(\mu_p) = (\mu_p)^2$ for the differential operator
$\mathcal{L} = - \partial_{xx}$.

Following the same procedure, we present a specific scheme for the two-dimensional problem. Define the physical index and grid points sets as
 \beas
 &&\mathcal{I}_{N} := \{ (j, k) \in \mathbb{N}^2~ |~ 1\le j \le N-1, ~ 1 \le k \le N-1 \}, \\[0.3em]
 &&\mathcal{T}_{\bx} := \{(x_j, y_k) := (-L+j h, -L+k h), \quad (j, k) \in \mathcal{I}_{N}  \}.
 \eeas
 The corresponding numerical scheme reads as follows
 \bea
\varphi_2^{N}(x_j, y_k) =\sum_{(p, q) \in \mathcal{I}_{N}}
\frac{(\tilde{\rho}_2)_{pq}^s}{\widehat{\mathcal{L}}(\mu_p, \mu_q )} \sin \left( \frac{\pi j p}{N} \right) \sin \left(  \frac{\pi k q}{N} \right),
\label{SineSpec2D}
\eea
 where
 \bea
(\tilde{\rho}_2)_{pq}^s = \frac{4}{N^2} \sum_{(j, k) \in \mathcal{I}_{N}} \rho_2(x_j, y_k)  \sin \left( \frac{\pi j p}{N} \right) \sin \left(  \frac{\pi k q}{N} \right).
\label{DisSin2D}
\eea

The proposed scheme is easy to implement and
can be efficiently executed using DST with $\mathcal{O} (N^d\log(N^d))$ computational complexity.
 It is applicable to a large class of kernels, and
we summarize some common kernels along with their corresponding differential operator $\mathcal{L}$  in Table \ref{tab:DiffAsyKernel}.

\begin{table}[htpb!]
\centering
\caption{The corresponding differential operator and the far-field asymptotics of $\varphi_2$ for common kernels.}
\label{tab:DiffAsyKernel}
\begin{adjustbox}{width=1.00\textwidth}
\begin{tabular}{llcc}
\toprule
  & $U(r)$ &  $\mathcal{L}$     &\qquad  \qquad  Far-field asymptotics of $\varphi_2$\\
\midrule
\rule{0pt}{16pt}
\footnotesize{\textbf{Coulomb}} &  {\footnotesize{$2D$:}} $ \frac{1}{2 \pi r} $ &  $\sqrt{-\Delta}$ & $  r^{-(m+2)}$ \\[6pt]
\midrule
\rule{0pt}{16pt}
\multirow{2}{*}{\footnotesize{\textbf{Poisson}}}
 &  {\footnotesize{$2D$:}} $ \frac{-1}{2 \pi} \ln(r) $ & $ -\Delta $ & $ r^{-(m+1)} $ \\[6pt]
 &  {\footnotesize{$3D$:}} $\frac{1}{4 \pi r} $ & $ -\Delta $ & $ r^{-(m+2)}$ \\[6pt]
 \midrule
 \rule{0pt}{16pt}
 \multirow{2}{*}{\footnotesize{\textbf{Biharmonic}}}
 & {\footnotesize{$2D$:}} $ \frac{- r^2 }{8 \pi  } \big[\ln(r)-1\big] $
 &  $-\Delta^2$ & $ r^{-(m-1)} $ \\[6pt]
  & {\footnotesize{$3D$:}} $ \frac{r}{8 \pi}$
 & $-\Delta^2 $ & $ r^{-m} $ \\[6pt]
 \midrule
 \rule{0pt}{16pt}
 \multirow{2}{*}{\footnotesize{\textbf{Yukawa}}}
 & {\footnotesize{$2D$:}} $ \frac{1}{2\pi  } K_0(\lambda r)$
 &  $-\Delta+ \lambda^2$ & $ e^{-r}~ r^{-1/2} $ \\[6pt]
  & {\footnotesize{$3D$:}} $ \frac{1}{4 \pi r} e^{-\lambda r}$
 & $-\Delta+ \lambda^2 $ & $e^{-r}~ r^{-1} $ \\[6pt]
\bottomrule
\end{tabular}
 \end{adjustbox}
\end{table}


%
%

\subsection{FS-MM method}\label{sec:MidFou}
In this subsection, we introduce FS-MM method to calculate
\bea
\label{Phi2FouInt}
\varphi_2(\bx)=\int_{\mathbb{R}^{d}} U(\bx-\by) \rho_2(\by) {\rm d} \by=\frac{1}{(2 \pi)^d} \int_{\mathbb{R}^d} \widehat{U}(\bk)~ \widehat{\rho_2}(\bk) ~e^{i \bk \cdot \bx}~ {\rm d} \bk.
\eea
Since the modified density $\rho_2$ is smooth and decays exponentially fast,
its Fourier transform $\widehat{\rho_2}(\bk)$ is also smooth and decays fast \cite{Shen2011Spectral}.
Therefore, it is reasonable to truncate the whole space $\mathbb R^d$ into
a large enough bounded domain $\mathcal{D}$.
The main idea of typical Fourier method is
 to discrete \eqref{Phi2FouInt} using the trapezoidal rule with a dual mesh $\Delta k=\pi/L$ on the truncation domain $\mathcal{D}=[-\pi/h, \pi/h]^d$.
 Usually, the Fourier transform of kernel $\widehat U(\bk)$ is singular at the origin. To avoid the origin, we adopt the
 midpoint rule instead of trapezoidal rule, and the resulting quadrature still can be efficiently accelerated using FFT within ${\mathcal O}( N^d \log(N^d) )$ complexity. Specifically, takeing 2D case as an example, we define the physical index and grid points sets as follows
 \beas
 &&\mathcal{J}_{N} := \{ (j, k) \in \mathbb{N}^2~ |~ -N/2\le j \le N/2-1, ~ -N/2 \le k \le N/2-1 \}, \\[0.3em]
 &&\mathcal{G}_{\bx} := \{ (x_j, y_k) := (j h, k h), \quad (j, k) \in \mathcal{J}_{N}  \}.
 \eeas
By discretizing \eqref{Phi2FouInt} using the midpoint rule with a dual mesh $\Delta k=\pi/L$ on the truncation domain $\mathcal{D}=[-\pi/h, \pi/h]^d$, we obtain
\bea
\qquad
\label{SchemeMid}
\varphi_2^N(\bx_{jk}) =
\frac{1}{(2 L) ^2}\hspace{-0.5em}
\sum_{(p, q) \in \mathcal{J}_{N}} \hspace{-0.6em}
\widehat{U}(\nu_{p+\frac{1}{2}}, \nu_{q+\frac{1}{2}}) \widehat{\rho_2}(\nu_{p+\frac{1}{2}}, \nu_{q+\frac{1}{2}}) e^{\frac{2\pi \im}{N}\big[(p+\frac{1}{2}) j + (q+\frac{1}{2}) k\big]}, \hspace{-0.9em}
\eea
where $\bx_{jk}:= (x_j, y_k)$, $\nu_p = (2 \pi p)/(2 L)$ and  $\nu_q = (2 \pi q)/(2 L)$. The Fourier transform   $\widehat{\rho_2}(\nu_p, \nu_q)$  is well approximated by applying the trapezoidal rule, and the resulting
summation is given as follows
\bea
\widetilde{\rho_2}(\nu_p, \nu_q)  =  h^2 \sum_{(j, k)\in \mathcal{J}_{N}}\rho(x_{j}, y_{k})~e^{-\frac{2\pi i}{N}(p j + q k)}.
\label{rhohatS}
\eea

\subsection{Domain expansion and tensor structure simplification}

Both the SP-MM and FS-MM methods are simple, efficient and can achieve the arbitrary high order with respect to
$L^{-1}$ (see Theorem \ref{thm:ErrHomDirSin}-\ref{thm:ErrMidFou}).
Therefore, the accuracy can be furthermore improved by expanding the computational domain. Without loss of generality, let the expanded domain be $\bR_{SL}:=  [-SL, SL]^d$, where  $S$ is referred to as the \textbf{domain expansion factor}.
Taking FS-MM method as an example,
 the scheme \eqref{SchemeMid} becomes
\bea
\qquad \quad
\label{SchemeMidExpand}
\varphi_2^N(\bx_{jk}) =
\frac{1}{(2 S L) ^2}\hspace{-0.8em}
\sum_{(p, q) \in \mathcal{J}_{S N}}\hspace{-0.8em}
\widehat{U}(\nu_{p+\frac{1}{2}}, \nu_{q+\frac{1}{2}}) \widetilde{\rho_2}(\nu_{p+\frac{1}{2}}, \nu_{q+\frac{1}{2}})e^{\frac{2\pi \im}{S N}\big[(p+\frac{1}{2}) j + (q+\frac{1}{2}) k\big]}.\hspace{-0.8em}
\eea

It is obvious that the accuracy can be improved by choosing a large $S$, but this bottlenecks the efficiency
 and puts a heavy burden on memory requirement.
Specifically, the memory costs and the computational complexity are $\mathcal{O}((SN)^d)$ and $\mathcal{O}((SN)^d \log (SN)^d)$ respectively.  In a companion paper \cite{RecursiveExpandDomain},
Anderson  proposed a recursive expanding domain method to
help reduce the memory costs to $\mathcal{O}(S^{d-1} N^{d})$ and the complexity to $\mathcal{O}((S^{d-1} N^d) \log (S^{d-1} N^d))$, which becomes more favorable with $S$.
Thus, while a larger $S$ improves accuracy, it inevitably imposes a trade-off in terms of efficiency.

 Fortunately, the resulting summation \eqref{SchemeMidExpand} can be equivalently reformulated as a discrete
convolution structure by plugging \eqref{rhohatS} into \eqref{SchemeMidExpand} and switching the summation order, that is,
\bea
\varphi_{2}^N(x_j, y_k)  =\sum_{(j', k') \in \mathcal{J}_N} T
 _{j-j', k-k'} ~(\rho_2)_{j', k'}.  \label{SchemeMidFouTen}
\eea
The tensor $T$ reads explicitly as
\bea
T_{j,k}= \frac{1}{ (S N)^2} \sum_{(p, q) \in \mathcal{J}_{S N}}
\widehat{U}(\nu_{p+\frac{1}{2}},\nu_{q+\frac{1}{2}}) ~ e^{\frac{2\pi \im}{S N} \big[(p+\frac{1}{2}) j+(q+\frac{1}{2}) k\big]},
\label{TensorMidFou}
\eea
and it can be computed out \textit{once for all} within $\mathcal{O}((SN)^2 \log((SN)^2))$ float operations using iFFT.

Once the tensor is available in the precomputation step, the effective computation is merely a pair of FFT and iFFT on a \textbf{double-sized} vector \cite{Liu2022optimal}.
The computational complexity becomes independent of  $S$ and is greatly reduced to
$$\mathcal{O}((2N)^d \log (2N)^d).$$  Such tensor acceleration technique greatly improves the efficiency,
particularly in scenarios where the potential is frequently evaluated under the same numerical setup, such as in computing the ground state and dynamics of the nonlocal Schr\"{o}dinger equation \cite{BaoBECRev, PfauDipolar}.
We refer to it as the $\mathbf{FS\text{-}Tensor}$ method.

\vspace{0.5em}

 For SP-MM method, using the identity $\sin(x) = (e^{\im x} - e^{-\im x})/(2\im)$ and the even-odd decomposition
 \eqref{oddEvenDeco} of density, as described in the proof of Theorem
 \ref{thm:ErrHomDirSin2DCoulomb},  it can equivalently be rewritten as $2^d$ discrete convolution summations. To be
 specific, for two-dimensional problem, we obtain
 \beas
 \varphi_{2}^N(x_j, y_k) ~ & = & \sum_{(j', k') \in \mathcal{J}_N} \Big[  T^{ee}
 _{j-j', k-k'} ~(\rho_2^{ee})_{j', k'} +  T^{eo}
 _{j-j', k-k'} ~(\rho_2^{eo})_{j', k'} + \\
&& \qquad \qquad ~~ T^{oe}
 _{j-j', k-k'} ~(\rho_2^{oe})_{j', k'} +  T^{oo}
 _{j-j', k-k'} ~(\rho_2^{oo})_{j', k'} \Big] \\[1em]
 &:=& T^{ee} \ast \rho_2^{ee} + T^{eo} \ast \rho_2^{eo}+T^{oe} \ast \rho_2^{oe}+ T^{oo} \ast \rho_2^{oo},
 \eeas
 where the tensors are given by
  \bea
T^{ee}_{j,k}&= &\frac{1}{ (S N)^2} \sum_{(p, q) \in \mathcal{J}_{S N}}
\Big(\widehat{\mathcal{L}}(\mu_{2p+1},\mu_{2q+1})\Big)^{-1}  e^{\frac{2\pi \im}{S N} \big[(p+\frac{1}{2}) j+(q+\frac{1}{2}) k\big]}, \label{TensorHomDirSin} \\[0.4em]
T^{eo}_{j,k}&= &\frac{1}{ (S N)^2} \sum_{\substack{(p, q) \in \mathcal{J}_{SN} \\ q \ne 0}}
\Big(\widehat{\mathcal{L}}(\mu_{2p+1},\mu_{2q})\Big)^{-1}  e^{\frac{2\pi \im}{S N} \big[(p+\frac{1}{2}) j+q k\big]},
\eea
\bea
T^{oe}_{j,k}&= &\frac{1}{ (S N)^2} \sum_{\substack{(p, q) \in \mathcal{J}_{SN} \\ p \ne 0}}
\Big(\widehat{\mathcal{L}}(\mu_{2p},\mu_{2q+1})\Big)^{-1}  e^{\frac{2\pi \im}{S N} \big[p j+(q+\frac{1}{2}) k\big]}, \\
T^{oo}_{j,k}&= &\frac{1}{ (S N)^2} \sum_{(p, q) \in \mathcal{J}_{SN} \setminus \{(0, 0)\}}
\Big(\widehat{\mathcal{L}}(\mu_{2p},\mu_{2q})\Big)^{-1}  e^{\frac{2\pi \im}{S N} \big[p j+q k\big]}.
\eea

Based on the relation between midpoint rule and rectangle rule given in Theorem~\ref{thm:ErrMidFou}, the following approximation is valid
by neglecting error terms of the same order and above, that is,
  \beas
   \varphi_{2}^N  &=& T^{ee} \ast \rho_2 + [T^{eo}-T^{ee}] \ast \rho_2^{eo}+
   [T^{oe}-T^{ee}] \ast \rho_2^{oe}+ [T^{oo}-T^{ee}] \ast \rho_2^{oo} \\[0.5em]
   &\approx& T^{ee} \ast \rho_2.
  \eeas
  Therefore, we can simplify the SP-MM method to a single discrete convolution summation \eqref{SchemeMidFouTen}
  with the tensor $T_{j, k} = T^{ee}_{j, k}$.
We refer to it as the $\mathbf{SP\text{-}Tensor}$ method.
For clarity,  we present a detailed step-by-step algorithm proposed in Algorithm \ref{AlgMidFouTen} using the 2D case.
\floatname{algorithm}{Algorithm}

\begin{algorithm}
\renewcommand{\thealgorithm}{1}
\caption{FS-Tensor/SP-Tensor method for $\varphi(\bx) = (U \ast \rho)(\bx)$. }
\label{AlgMidFouTen}
\begin{algorithmic}[1]
  \REQUIRE (i). Precompute $W(\bx):= [U \ast G](\bx)$ with $G(\bx)=\frac{1}{(2 \pi \sigma^2)^{d/2}} e^{-\frac{|\bx|^2}{2\sigma^2}}$.  \\[0.5em]
    \qquad ~~ (ii). Choose a $m \in \mathbb{N}$ and  precompute $\frac{\partial^{\bm{\alpha}} G(\bx)}{\partial \bx^{\bm{\alpha}}}$, $\frac{\partial^{\bm{\alpha}} W(\bx)}{\partial \bx^{\bm{\alpha}}}$ with $|\bm{\alpha}|=0, \dots, m$.  \\[0.5em]
    \quad ~~~~ (iii). Choose a $S \in \mathbb{N}$ and precompute the tensor $T$ by Eqn.\eqref{TensorMidFou} (FS) or \\ \qquad \quad ~~~~~ Eqn.\eqref{TensorHomDirSin} (SP)  via iFFT.  \\[0.5em]
\STATE Compute the coefficients $\gamma_{\bm{\alpha}}$ with $|\bm{\alpha}|=0, \dots, m$ (see~ \ref{coefficient}). \\[0.5em]
\STATE Compute $\rho_1(\bx) = \sum\limits_{|\bm{\alpha}|=0}^m \gamma_{\bm{\alpha}} \frac{\partial^{\bm{\alpha}} G(\bx)}{\partial \bx^{\bm{\alpha}}}   $  and $ \varphi_1 (\bx) =\sum\limits_{|\bm{\alpha}|=0}^m \gamma_{\bm{\alpha}} \frac{\partial^{\bm{\alpha}} W(\bx)}{\partial \bx^{\bm{\alpha}}}  $ exactly.\\[0.3em]
\STATE Set $\rho_2 = \rho - \rho_1$ and compute $ \varphi_2 $  by Eqn.\eqref{SchemeMidFouTen} via FFT/iFFT. \\[1.0em]
\STATE Compute $\varphi = \varphi_1+ \varphi_2$.
\end{algorithmic}
\end{algorithm}

\section{Error estimates} \label{sec:ErrEst}
In this section, we provide rigorous error estimates.
For the SP-MM method, the SP method achieves spectral accuracy for solving linear differential equations subject to homogeneous Dirichlet boundary conditions.
The boundary condition approximation error will dominate as the numerical discretization gets finer
because the nonlocal potential does not satisfy homogeneous Dirichlet boundary condition exactly but with an approximation error of some polynomial order in terms of domain size.
Detailed error analysis are provided in Theorem \ref{thm:ErrHomDirSin} and Theorem \ref{thm:ErrHomDirSin2DCoulomb}.  For the FS-MM method, the error mainly comes from the discretization of Fourier integral using the midpoint rule. Rigorous error analysis are provided in Theorem \ref{thm:ErrMidFou}.

 To quantify the error estimates, we define the following norm
\beas
 \| \varphi - \widetilde{\varphi} \|_{\infty(\Omega)} := \sup_{\bx \in \Omega} \left| (\varphi - \widetilde{\varphi}) (\bx) \right|,
\eeas
and we use $A \lesssim B$ to denote $A\leq c B$ where  $c>0$ is a constant.
For simplification of notations, below we omit the 2 in the symbols of $\rho_2$ and $\varphi_2$.

\vspace{0.5em}

\textbf{For the SP-MM method}, we aim to derive the error $\| \varphi - \widetilde{\varphi} \|_{\infty(\Omega)}$, where
$\varphi = U \ast \rho$ and $\widetilde{\varphi}$
satisfies the differential equation
\bea
\left\{\ba{ll}
 \mathcal{L} \widetilde{\varphi}(\bx)=\rho(\bx), \qquad \bx \in \Omega, \\[0.6em]
\widetilde{\varphi}(\bx) \big|_{\bx \in \partial \Omega}=0.
\ea\right.
\label{PDEHomDiri}
\eea
If the maximum norm estimate holds for the equation \eqref{PDEHomDiri}, we have the following theorem.

\vspace{0.5em}

\begin{thm}[SP-MM method]
 \label{thm:ErrHomDirSin}
For a smooth and compactly supported function $ \rho(\bx)$ that satisfies the vanishing moment condition
\eqref{MomVan1},
the following estimate
\begin{gather*}
\| \varphi- \widetilde{\varphi} \|_{\infty(\Omega)} \lesssim
\sum_{|\bm{\beta}| = m+1}
  \max_{\begin{subarray}{c}
   \bx \in \partial \Omega  \\
   \by \in  \Omega_0 \\
    0 \le s \le  1
  \end{subarray}} \left| {\partial^{\beta} U(\bx-s \by)  }\right|
\end{gather*}
holds true, where $\Omega_0:=\supp\{\rho\}   \subset \Omega$.
\end{thm}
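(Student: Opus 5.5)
The plan is to estimate the error $e := \varphi - \widetilde{\varphi}$ on $\Omega$ through the boundary-value problem it satisfies, and then to bound its boundary data with the computation already carried out in the proof of Lemma~\ref{lem:farAsyAna}. Since $\varphi = U\ast\rho$ solves $\mathcal{L}\varphi = \rho$ in all of $\mathbb{R}^d$, its restriction to $\Omega$ satisfies the same equation as $\widetilde{\varphi}$ in \eqref{PDEHomDiri}; only the boundary values differ. Subtracting the two problems gives
\begin{equation*}
\mathcal{L} e = 0 \quad \text{in } \Omega, \qquad e\big|_{\partial\Omega} = \varphi\big|_{\partial\Omega}.
\end{equation*}
The interior error is therefore governed entirely by the trace of the exact potential on $\partial\Omega$.

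The second step invokes the maximum norm estimate that the theorem presupposes for \eqref{PDEHomDiri}. I interpret it as a stability bound for $\mathcal{L}$-harmonic functions on $\Omega$:
\begin{equation*}
\|e\|_{\infty(\Omega)} \lesssim \|e\|_{\infty(\partial\Omega)} = \sup_{\bx\in\partial\Omega}|\varphi(\bx)|.
\end{equation*}
For $\mathcal{L}=-\Delta$ or $-\Delta+\lambda^2$ this is simply the classical maximum principle, with constant $1$. For higher-order or nonlocal operators, such as the biharmonic operator or $\sqrt{-\Delta}$, I would take this bound as the stated hypothesis and not prove it. For the nonlocal case the boundary condition would really have to be read as a condition on the exterior of $\Omega$, and I will not attempt to resolve that here.

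The third step bounds $\varphi$ on $\partial\Omega$. I repeat the argument of Lemma~\ref{lem:farAsyAna} verbatim:
\begin{enumerate}[label=(\roman*)]
\item Taylor-expand $U(\bx-\by)$ about $\bx$ to order $m$, using the integral remainder $r_m$.
\item Integrate against $\rho(\by)$ over $\Omega_0$.
\item Apply the vanishing moment condition \eqref{MomVan1}, which kills every term of order at most $m$, so only the remainder survives.
\end{enumerate}
Bounding $|\by^{\bm\beta}|$ and $\int|\rho|$ by constants depending only on $\rho$ and $\Omega_0$ then gives \eqref{FarAsymMax}, namely for $\bx\in\partial\Omega$
\begin{equation*}
|\varphi(\bx)| \lesssim \sum_{|\bm\beta|=m+1}\max_{\by\in\Omega_0,\,0\le s\le1}|\partial^{\bm\beta}U(\bx-s\by)|.
\end{equation*}
Taking the supremum over $\bx\in\partial\Omega$ and combining this with the second step yields the claim.

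The one delicate point is that $U$ is singular at the origin, so the Taylor formula requires $\bx - s\by \neq 0$ for all $s\in[0,1]$. This holds because $\Omega_0\subset\Omega$ strictly, and a domain such as $[-L,L]^d$ that contains the support is star-shaped about the origin. Hence $s\by$ stays in $\Omega_0$'s convex hull, which lies inside $\Omega$, away from $\partial\Omega$ where $\bx$ lives. If $\Omega_0$ is not convex, I would replace it by its convex hull; this only enlarges the max. The main obstacle is therefore not the calculus but justifying the maximum-norm stability step for general $\mathcal{L}$, which the theorem explicitly assumes.
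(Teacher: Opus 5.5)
Your proposal matches the paper's proof step for step: subtract the two problems to get $\mathcal{L}w=0$ in $\Omega$ with $w=\varphi$ on $\partial\Omega$, apply the assumed maximum-norm estimate, and bound $\|\varphi\|_{\infty(\partial\Omega)}$ by taking the supremum over $\bx\in\partial\Omega$ in the Taylor-remainder estimate \eqref{FarAsymMax} of Lemma~\ref{lem:farAsyAna}. The one slip is in your side remark on the singularity, which the paper does not address: $s\by$ need not lie in the convex hull of $\Omega_0$ (take $s=0$ when the origin is outside that hull), and the correct reason that $\bx-s\by\neq 0$ is that the segment from $0$ to $\by$ lies in the open convex set $(-L,L)^d$, so, since $\Omega_0\subset(-L,L)^d$ is compact, these points stay a positive distance from $\partial\Omega$.
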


\vspace{0.5em}

\begin{proof}
Let $w=\varphi-\widetilde{\varphi}$, using $ \mathcal{L} \varphi = \rho$ with $\bx \in \mathbb{R}^d$ and equation \eqref{PDEHomDiri},  we obtain
\beas
\left\{\ba{ll}
\mathcal{L} w(\bx)=0, \qquad ~~ \bx \in \Omega, \\[0.8em]
w (\bx)\big|_{\bx \in \partial\Omega}  = \varphi(\bx)\big|_{\bx \in \partial\Omega}.
\ea\right.
\label{ErrPDE}
\eeas
Using  maximum norm estimate \cite{MaximumPrinc}, we have
\beas
\| w \|_{\infty( \Omega)} \le \| w\|_{\infty(\partial \Omega)}= \| \varphi \|_{\infty(\partial \Omega)}.
\label{MaxNorm}
\eeas
With the help of Lemma \ref{lem:farAsyAna}, taking supremum with respect to $\bx$ for inequation \eqref{FarAsymMax}, we obtain
\begin{gather*}
\| \varphi - \widetilde{\varphi} \|_{\infty(\Omega)} \le \| \varphi \|_{\infty(\partial \Omega)} \lesssim
\sum_{|\bm{\beta}| = m+1}
  \max_{\begin{subarray}{c}
   \bx \in \partial \Omega  \\
   \by \in  \Omega_0 \\
    0 \le s \le  1
  \end{subarray}} \left| {\partial^{\beta} U(\bx-s \by)  }\right|.
\end{gather*}
Then the proof is completed.
\end{proof}
\vspace{0.5em}
\begin{remark}
For 3D Poisson potential with $U(\bx) = \frac{1}{4 \pi |
\bx|}$, we have
\begin{gather*}
\| \varphi - \tilde{\varphi} \|_{\infty(\Omega)} \lesssim
  \sum_{|\bm{\beta}| = m+1}
  \max_{\begin{subarray}{c}
   \bx \in \partial \Omega  \\
   \by \in  \Omega_0 \\
    0 \le s \le  1
  \end{subarray}} \frac{1}{\left|\bx-s \by \right|^{m+2}}
   \lesssim  \frac{1}{(L-L_0)^{m+2}},
\end{gather*}
where $\Omega_0 = [-L_0, L_0]^3$ and $\Omega = [-L, L]^3$.
\end{remark}
\vspace{0.5em}
\vspace{0.5em}

If the maximum norm estimate is not well-established, such as the 2D Coulomb potential, we derive the error estimates from a different perspective.
For ease of explanation, we take the 2D Coulomb potential as an example, and error estimates for other potentials can be derived using the same technique.
The numerical scheme is given by
\beas
\varphi_{N}(x, y)=\sum_{p=1}^{N-1} \sum_{q=1}^{N-1} \frac{ \hat{\rho}^s_{pq}}{\sqrt{\mu_p^2+\mu_q^2}} \sin(\mu_p (x+L)) \sin(\mu_q (y+L)),
\eeas
where
\beas
\hat{\rho}^s_{pq} = \frac{1}{L^2} \int_{\bR_L} \rho(x, y) \sin(\mu_p(x+L))\sin(\mu_q(y+L)) {\rm d}x {\rm d}y.
\eeas

\begin{thm}[SP-MM method for 2D Coulomb potential]
\label{thm:ErrHomDirSin2DCoulomb}
For a smooth and compactly supported function $ \rho(\bx)$ that satisfies the vanishing moment condition
\eqref{MomVan1},
the following estimate
 \beas
\| \varphi -\varphi_N\|_{\infty(\Omega)} \lesssim L^{-(m+2)}
\eeas
holds true for 2D Coulomb potential.
\end{thm}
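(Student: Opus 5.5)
We plan to bypass the maximum principle, which is unavailable for $\sqrt{-\Delta}$, and compare $\varphi$ directly with the sine series through Fourier analysis of a periodic extension. We use the odd/even decomposition mentioned in the description of the SP-Tensor method.

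\textbf{Step 1: decomposition and reflection.} Shift coordinates so that the box is $[0,2L]^2$. Write $\rho=\rho^{ee}+\rho^{eo}+\rho^{oe}+\rho^{oo}$ according to parity in $x$ and $y$. Extend $\rho$ by odd reflection across the lines $x=\pm L$ and $y=\pm L$ to a function $\rho^{\mathrm{per}}$ that is $4L$-periodic in each variable. The sine coefficients $\hat\rho^s_{pq}$ are then exactly the Fourier coefficients of $\rho^{\mathrm{per}}$ at the frequencies $(\mu_p,\mu_q)=(\pi p/2L,\pi q/2L)$. Hence $\varphi_N$ (we treat the full series, i.e.\ $N\to\infty$, since truncation and aliasing errors are spectrally small for smooth $\rho$) equals the periodic Coulomb potential $\sum \hat{\rho}^{\mathrm{per}}(\bk)|\bk|^{-1}e^{i\bk\cdot\bx}/(4L)^2$ over the dual lattice $\frac{\pi}{2L}\mathbb{Z}^2\setminus\{0\}$. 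Because $\rho^{\mathrm{per}}$ is odd, only frequencies with $p,q\neq0$ contribute, which matches the sine index set. On the other hand, $\varphi(\bx)=\frac{1}{(2\pi)^2}\int \widehat\rho(\bk)|\bk|^{-1}e^{i\bk\cdot\bx}d\bk$. The error is therefore the difference between a lattice sum and an integral.

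\textbf{Step 2: Poisson summation.} Apply Poisson summation to the odd-reflected sum. Equivalently, by the method of images, $\varphi_N(\bx)=\sum_{\bn\in\mathbb{Z}^2}\pm\,\varphi(\text{image}_{\bn}(\bx))$, an alternating sum of the free-space potential evaluated at the reflections of $\bx$ across the box faces, with the $\bn=0$ term equal to $\varphi(\bx)$. Then
\[
\varphi_N(\bx)-\varphi(\bx)=\sum_{\bn\neq 0}\pm\,\varphi(\bx_{\bn}),
\]
and for $\bx\in\Omega$ every image $\bx_{\bn}$ satisfies $|\bx_{\bn}|\gtrsim |\bn|L$ once $L\ge 2L_0$. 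Lemma~\ref{lem:farAsyAna}, or more precisely the bound \eqref{FarAsymMax} with $U=1/(2\pi r)$ and $|\partial^{\bm\beta}U(\bz)|\lesssim |\bz|^{-(m+2)}$ for $|\bm\beta|=m+1$, gives $|\varphi(\bx_{\bn})|\lesssim (|\bn|L)^{-(m+2)}$. The lattice sum $\sum_{\bn\neq0}|\bn|^{-(m+2)}$ converges in 2D when $m+2>2$, giving the bound $L^{-(m+2)}$. For $m=0$ the series diverges absolutely, so the alternating signs must be used: pair the images by reflection to gain one extra derivative, or equivalently note that the reflected images of $\rho$ carry zero net charge in each cell.

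\textbf{Step 3: justification.} Poisson summation requires convergence. The image sum for the Coulomb kernel is only conditionally convergent at low $m$, so we would carry it out at the Fourier level. We would show that $\widehat\rho(\bk)|\bk|^{-1}$ is $C^{m}$ with $\widehat\rho=O(|\bk|^{m+1})$ at $0$, using \eqref{TayExp}, and that its inverse transform is exactly $\varphi$ with the decay above. The \textbf{main obstacle} is this convergence and sign bookkeeping of the image series, particularly the low-$m$ cases. We would first try to handle them by grouping the four parity components, applying the image argument to each of $\rho^{ee},\rho^{eo},\rho^{oe},\rho^{oo}$ separately, and using the cancellation within each $2\times2$ block of images to obtain an effective extra order of decay.
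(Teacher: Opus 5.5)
Your argument is sound in substance but takes a different route from the paper. The paper never expands $\varphi_N-\varphi$ as an image sum of $\varphi$ itself. Instead it sets $w=\varphi-\varphi_N$ and applies the maximum principle for $-\Delta$ (not for $\sqrt{-\Delta}$) to $-\Delta w=g$. It bounds the boundary data $\|\varphi\|_{\infty(\partial\bR_L)}\lesssim L^{-(m+2)}$ by Lemma~\ref{lem:farAsyAna}, and then estimates the interior residual $g=-\Delta\varphi+\Delta\varphi_N$. Only there does your mechanism appear, applied to $-\Delta\varphi$ instead of $\varphi$. After the parity decomposition, $-\Delta\varphi_N$ is a midpoint/left-rectangle rule with step $\pi/L$ for $\frac{1}{(2\pi)^2}\int|\bk|\,\widehat{\rho}(\bk)e^{\im\bk\cdot\bx}\,{\rm d}\bk$. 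The quadrature error, computed from the Fourier series of the integrand (i.e.\ Poisson summation), is a signed lattice sum of $\Delta\varphi(\bx-2L\bp)$ over $\bp\neq0$. Since $|\Delta\varphi(\mathbf{z})|\lesssim|\mathbf{z}|^{-(m+4)}$, this converges absolutely for every $m\ge0$ and gives $\|g\|_{\infty}\lesssim L^{-(m+4)}$. The detour through $-\Delta$ thus buys absolute convergence and an integrand continuous at $\bk=0$, with no grouping. The price is an elliptic estimate, which on $[-L,L]^2$ really carries a factor of order $L^2$ in front of $\|g\|_{\infty}$; the two extra orders of decay are exactly what absorb it. Your route dispenses with the elliptic step and gives the explicit representation $\varphi_N-\varphi=\sum_{\mathbf{n}\neq0}\pm\varphi(\bx_{\mathbf{n}})$, from which the rate $L^{-(m+2)}$ is read off directly from the nearest images. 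It is immediate for $m\ge1$; its only cost is the conditionally convergent case $m=0$ that you flag. Both arguments treat the truncation to $p,q\le N-1$ as negligible (the paper does so by assuming $\widehat{\rho}$ is supported in $[-\pi/h,\pi/h]^2$), so that is not a point of difference.

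To settle $m=0$, apply Poisson summation on the $4L$-lattice to an integrable grouped function rather than to $\varphi$. One choice is the potential of a full reflection cell $\rho(\by)-\rho(2L-y_1,y_2)-\rho(y_1,2L-y_2)+\rho(2L-y_1,2L-y_2)$. Its charge \emph{and} dipole moment vanish for every $\rho$; it is the dipole that matters, since $\rho$ is already neutral. Hence its potential is $O(L|\mathbf{z}|^{-3})$ for $|\mathbf{z}|\gg L$. The grouped series then converges absolutely, equals the untruncated sine series by Poisson summation, and is $\lesssim L^{-2}$. Your parity variant also works: $\rho^{ee}$ and $\rho^{oo}$ have vanishing dipole by symmetry, and the images of $\rho^{eo}$, $\rho^{oe}$ alternate in sign in one direction, so pairing there gains a derivative. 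One small slip: $\widehat{\rho}(\bk)/|\bk|$ with $\widehat{\rho}=O(|\bk|^{m+1})$ is in general only $C^{m-1}$ at the origin, not $C^m$. Only its continuity there is needed.
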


\vspace{0.5em}

\begin{proof}
Let $w:=\varphi-\varphi_N$, we obtain
\bea
\left\{\ba{ll}
-\Delta w(\bx) = (-\Delta \varphi(\bx)) - (-\Delta \varphi_N(\bx)) := g(\bx), \qquad ~~ \bx \in \bR_L, \\[0.8em]
w (\bx)\big|_{\bx \in \partial\bR_L}  = \varphi(\bx)\big|_{\bx \in \partial\bR_L}.
\ea\right.
\label{ErrPDECou}
\eea
Using the maximum norm estimate, we have
\bea
\| w \|_{\infty( \bR_L)} \le \| \varphi\|_{\infty(\partial \bR_L)}+ \| g\|_{\infty(\bR_L)}.
\label{maxNormw}
\eea
With the help of the Lemma \ref{lem:farAsyAna}, we have
\bea
\|\varphi\|_{\infty(\partial \bR_L)} \lesssim L^{-(m+2)}.
\label{errBound}
\eea
Next, we focus on estimating
 $\| g\|_{\infty(\bR_L)}$. In fact, $g(\bx)$ can be equivalently rewritten as
 \bea
 g(x, y) =&& \frac{1}{(2 \pi)^2}\int_{\mathbb{R}^2}
 \sqrt{k_1^2 + k_2^2}~ \widehat{\rho}(k_1, k_2) e^{\im (k_1 x + k_2 y)} {\rm d} k_1 {\rm d} k_2- \notag \\ &&\sum_{p=1}^{N-1} \sum_{q=1}^{N-1}\sqrt{\mu_p^2+\mu_q^2} ~ \hat{\rho}^s_{pq} \sin(\mu_p (x+L)) \sin(\mu_q (y+L)) .
  \label{disErrCou}
 \eea
 Our idea is to utilize the relationship
 \bea
  \sin(x)=(e^{\im x}-e^{-\im x})/(2 \im)
  \label{Relasinexp}
  \eea
  to transform $sin(x)$ function in \eqref{disErrCou} into $e^{\im x}$ function. Specifically,
 a tedious computation shows that
 \beas
-\Delta \varphi_N &=&\sum_{p=1}^{N-1} \sum_{q=1}^{N-1}\sqrt{\mu_p^2+\mu_q^2} ~ \hat{\rho}^s_{pq} \Big(\frac{e^{\im \mu_p (x+L)} -e^{-\im \mu_p (x+L)} }{2 \im} \Big)
\Big(\frac{e^{\im \mu_q (y+L)} -e^{-\im \mu_q (y+L)} }{2 \im} \Big) \\
&=&-\frac{1}{4}\sum_{p=1}^{N-1} \sum_{q=1}^{N-1}\sqrt{\mu_p^2+\mu_q^2} ~ \hat{\rho}^s_{pq} \bigg(e^{\im \big[\mu_p (x+L)+ \mu_q (y+L)\big]}
-e^{\im \big[\mu_p (x+L)- \mu_q (y+L)\big]} \\
&&\qquad \qquad \qquad  \qquad \qquad ~ -e^{\im \big[-\mu_p (x+L)+ \mu_q (y+L)\big]}
+e^{-\im \big[\mu_p (x+L)+ \mu_q (y+L)\big]}
\bigg).
 \eeas
 Noted that $\hat{\rho}_{p(-q)}^s = -\hat{\rho}_{pq}^s$, we have
 \beas
 &&\sum_{p=1}^{N-1} \sum_{q=1}^{N-1}\sqrt{\mu_p^2+\mu_q^2} ~ \hat{\rho}^s_{pq} e^{\im \big[\mu_p (x+L)- \mu_q (y+L)\big]} \\
 = &&\sum_{p=1}^{N-1} \sum_{q=-N+1}^{-1}\sqrt{\mu_p^2+\mu_q^2} ~ (-\hat{\rho}^s_{pq})~ e^{\im \big[\mu_p (x+L)+\mu_q (y+L)\big]}
 \eeas
 In a similar way, we have
 \bea
-\Delta \varphi_N &=&-\frac{1}{4}
\sum_{\substack { p=-N+1\\ p  \not=0}  }^{N-1} \sum_{\substack { q=-N+1\\ q  \not=0}  }^{N-1}\sqrt{\mu_p^2+\mu_q^2} ~ \hat{\rho}^s_{pq}~ e^{\im (\mu_p +\mu_q)L}~ e^{\im (\mu_p x+\mu_q y)}.
\label{LapPhiN1}
 \eea
Next, we aim to derive the relationship between $\hat{\rho}^s$ and $\widehat{\rho}$.
Using Eqn.  \eqref{Relasinexp}, we have
\beas
\hat{\rho}^s_{pq}
&=&
-\frac{1}{4 L^2} \int_{\bR_L} \rho(x, y)
\bigg(e^{\im \big[\mu_p (x+L)+ \mu_q (y+L)\big]}
-e^{\im \big[\mu_p (x+L)- \mu_q (y+L)\big]} \\
&&\qquad \qquad \qquad  \quad ~ -e^{\im \big[-\mu_p (x+L)+ \mu_q (y+L)\big]}
+e^{-\im \big[\mu_p (x+L)+ \mu_q (y+L)\big]}
\bigg) {\rm d} x {\rm d} y.
\eeas
For the first term,  we apply the change of variables
$x \mapsto -x$ and $y \mapsto -y$ in the integral to obtain
\beas
&&\int_{\bR_L}\rho(x, y) e^{\im \big[\mu_p (x+L)+ \mu_q (y+L)\big]} {\rm d} x {\rm d} y\\
=&& \int_{\bR_L}\rho(-x, -y) e^{\im \big[\mu_p (-x+L)+ \mu_q (-y+L)\big]} {\rm d} x {\rm d} y
\\
= &&\int_{\bR_L}(-1)^{p+q}\rho(-x, -y) e^{-\im \big[\mu_p (x+L)+ \mu_q (y+L)\big]} {\rm d} x {\rm d} y.
\eeas
Therefore, we obtain
\beas
\hat{\rho}^s_{pq} =
-\frac{1}{4 L^2} \int_{\bR_L} \Big[&&(-1)^{p+q}\rho(-x, -y)-
(-1)^{p}\rho(-x, y)-(-1)^{q}\rho(x, -y)+
\rho(x, y)
\Big] \\
&&e^{-\im \big[\mu_p (x+L)+ \mu_q (y+L)\big]}
 {\rm d} x {\rm d} y.
\eeas
If the density $\rho$ is an odd or even function, the above expression can be further simplified. However, the density $\rho$ does not generally exhibit such parity properties. Fortunately, any univariate function can be decomposed into the sum of an even and an odd function. Thus, we prove the result separately for the following four cases.

\vspace{0.5em}

{\bf{Case} 1:} When $\rho(x,y)$ is even with respect to both
$x$ and $y$, i.e., $\rho(-x,y) = \rho(x,y)$ and $\rho(x,-y) = \rho(x, y)$,
we have
\bea
\hat{\rho}^s_{pq} &=& -\frac{1}{4 L^2} e^{-\im( \mu_p+ \mu_q)L} \big[1-(-1)^p \big]\big[1-(-1)^q \big] \int_{\bR_L} \rho(x, y) e^{-\im (\mu_p x+ \mu_q y)}
 {\rm d} x {\rm d} y \notag \\[0.8em]
 &=&
 \left\{\ba{ll}
-\frac{1}{L^2}  e^{-\im( \mu_p+ \mu_q)L}
 ~ \widehat{\rho}(\mu_p, \mu_q),  \qquad p ~ \text{odd} ~ \& ~ q ~ \text{odd}, \\[0.8em]
0, \qquad \qquad \qquad \qquad \qquad \qquad \qquad \text{else}.
\ea\right.
\label{rhohatRela}
\eea
Plugging \eqref{rhohatRela} into \eqref{LapPhiN1}, $-\Delta \varphi_N$ can be simplified as
\beas
-\Delta \varphi_N &=&\frac{1}{4L^2}
\sum_{p=-N/2}^{N/2-1} \sum_{q=-N/2}^{N/2-1}\sqrt{\mu_{2p+1}^2+\mu_{2q+1}^2} ~ \widehat{\rho}(\mu_{2p+1}, \mu_{2 q+1})~ e^{\im (\mu_{2p+1} x+\mu_{2q+1} y)} \\[0.5em]
&=&
\frac{1}{4L^2}
\sum_{p=-N/2}^{N/2-1} \sum_{q=-N/2}^{N/2-1}\sqrt{\nu_{p+\frac{1}{2}}^2+\nu_{q+\frac{1}{2}}^2} ~ \widehat{\rho}(\nu_{p+\frac{1}{2}}, \nu_{q+\frac{1}{2}})~ e^{\im \big(\nu_{p+\frac{1}{2}} x+\nu_{q+\frac{1}{2}} y\big)},\\
&:=& \Big(\frac{2 \pi}{2L}\Big)^2 \sum_{p=-N/2}^{N/2-1} \sum_{q=-N/2}^{N/2-1} f(\nu_{p+\frac{1}{2}}, \nu_{q+\frac{1}{2}}),
\eeas
where $f(k_1, k_2) := \frac{1}{ (2\pi)^2}\sqrt{k_1^2 +k_2^2} ~\widehat{\rho}(k_1, k_2)~ e^{\im (k_1 x + k_2 y)}$ ,  $\nu_p = (2 \pi p )/(2L)$ and $\nu_q = (2 \pi q)/(2L) $.
Note that $f$ depends on $x$ and $y$.
For simplicity, we omit the explicit dependence on
$x$ and $y$ in the notation.
 Since $\widehat{\rho}$ is compactly supported, $f$ is also compactly supported.
Therefore, it is reasonable to choose a fixed mesh size $h$ such that $\supp \{\widehat{f} \} \subsetneq  R_h := [-\pi/h, \pi/h]^2 $.
In this case, the function $g$ in \eqref{disErrCou} equivalently becomes
\beas
g &=& \int_{R_h}
 f(k_1, k_2) {\rm d} k_1 {\rm d} k_2 -  \Big(\frac{2 \pi}{2L}\Big)^2  \sum_{p=-N/2}^{N/2-1} \sum_{q=-N/2}^{N/2-1} f(\nu_{p+\frac{1}{2}}, \nu_{q+\frac{1}{2}}) \\
 &:=& \int_{R_h}
 f(k_1, k_2) {\rm d} k_1 {\rm d} k_2 - Q^{ \rm{MM}}(f) := E^{\rm{MM}}(f).
\eeas
Clearly, $g$ is the discretization error of the midpoint rule.
We consider the Fourier series expansion of $f$ and denote
\beas
P_K(f)(k_1, k_2):= \sum_{p=-K}^{K} \sum_{q=-K}^K \widehat{f}_{pq} ~ e^{\im (\lambda_p k_1 + \lambda_q k_2)},
\eeas
where $\lambda_p = \frac{2 \pi}  {2 \pi/h} p = p h$ and the Fourier coefficients are given as follows
 \beas
 \hat{f}_{pq} &=& \frac{1}{|R_h|} \int_{R_h} f(k_1, k_2) e^{-\im (\lambda_p k_1+ \lambda_q k_2)} {\rm d}k_1 {\rm d}k_2 \\
 &=&\frac{1}{|R_h|} \frac{1}{(2\pi)^2} \int_{R_h}  \sqrt{k_1^2 +k_2^2} ~\widehat{\rho}(k_1, k_2)~ e^{\im \big[k_1 (x-\lambda_p) + k_2 (y-\lambda_q)\big]}
 {\rm d}k_1 {\rm d}k_2 \\
  &=&  -\frac{h^2}{4 \pi^2} \Delta \varphi(x-\lambda_p, y-\lambda_q).
 \eeas
For simplicity, we define $\bp  = (p, q)$ and
$\mathcal{T}_{K}: = \{(p, q) \in \mathbb{Z}^2,~ p= -K \dots, K, q= -K, \dots, K \}$.
By leveraging continuity and piecewise differentiability of $f$, along with the following estimates
\beas
 && \Big| \sum_{(p, q)  \notin \mathcal{T}_K} \widehat{f}_{pq} e^{\im (\lambda_p k_1 + \lambda_q k_2)}\Big| \le \sum_{(p, q) \notin \mathcal{T}_K} \left| \widehat{f}_{pq} \right|
   \lesssim \sum_{|\bm p|>K} \left| \widehat{f}_{\bm p} \right|  \\
   &&
  \lesssim  \sum_{|\bm p|>K}  \left|\Delta \varphi(\bx- \bm p h )\right| \lesssim \sum_{|\bm p|>K} \frac{1}{|\bm p h - \bx|^{(m+4)}}
\lesssim \sum_{|\bm p|>K} \frac{1}{|\bm p- \frac{\bx}{h}|^{m+4}} \\
&& \lesssim \int_{K}^{\infty} \frac{1}{\big||\bx|-\frac{\sqrt{2} L}{h}\big|^{m+4}} {\rm d}{\bx} = 2 \pi
\int_{K}^{\infty} \frac{r }{\big|r-\frac{\sqrt{2} L}{h}\big|^{m+4}} {\rm d}{r} \\
&&\lesssim \frac{1}{K^{m+2}}  \rightarrow 0,  \qquad K \rightarrow + \infty,
\eeas
we obtain that $f$  has a uniformly convergent Fourier series, i.e.,
\beas
f= \sum_{p=-\infty}^{\infty} \sum_{q=-\infty}^\infty \widehat{f}_{pq} ~ e^{\im (\lambda_p k_1 + \lambda_q k_2)}.
\eeas
Using the linearity of $E^{\rm{MM}}(f)$ and the uniform convergence of the Fourier series of $f$, we have
\bea
E^{\rm{MM}}(f) = \sum_{p=-\infty}^{\infty} \sum_{q=-\infty}^{\infty} \hat{f}_{pq}~ E^{\rm{MM}} \left(e^{\im (\lambda_p k_1+ \lambda_q k_2)}\right).
\label{ErrMidF}
\eea
A simple calculation shows that
\bea
E^{\rm{MM}}(e^{\im (\lambda_p k_1+ \lambda_q k_2)}) =
 \left\{\ba{ll}
-\left(\frac{2 \pi}{h}\right)^2 (-1)^{p+q}, \quad \quad p,~ q \in N\mathbb{N}^{+}, \\[0.8em]
0, \qquad \qquad \qquad \qquad ~~\quad  \text{else}.
\ea\right.
\label{ErrMidExp}
\eea
By plugging Eqn. \eqref{ErrMidExp} into Eqn. \eqref{ErrMidF} and using Lemma \ref{lem:farAsyAna},
we have
\beas
|g| &=& \left| E^{\rm{MM}} (f) \right|= \Big| -\left(\frac{2 \pi}{h}\right)^2 \sum_{|p|>0}  \sum_{|q|>0} (-1)^{(p+ q)N}~ \hat{f}_{pN, qN} \Big| \notag \\
& \lesssim &\sum_{|p|>0} \sum_{|q|>0} \Big|  \Delta \varphi(x-\lambda_{pN}, y-\lambda_{qN}) \Big|
\lesssim \sum_{|\bp|>0} \frac{1}{|\bx-2 L \bp|^{m+4}} \notag\\
& = & \frac{1}{L^{m+4}} \sum_{|\bp|>0} \frac{1}{|\bp-\frac{\bx}{2L}|^{m+4}}
\lesssim \frac{1}{L^{m+4}}.
 \eeas

  \vspace{0.5em}

  {\bf{Case} 2:} When $\rho(x,y)$ is even with respect to
$x$ and odd with respect to $y$, i.e., $\rho(-x,y) = \rho(x,y)$ and $\rho(x,-y) = -\rho(x, y)$,
we have
\bea
\hat{\rho}^s_{pq} &=& -\frac{1}{4 L^2} e^{-\im( \mu_p+ \mu_q)L} \big[1-(-1)^p \big]\big[1+(-1)^q \big] \int_{\bR_L} \rho(x, y) e^{-\im (\mu_p x+ \mu_q y)}
 {\rm d} x {\rm d} y \notag \\[0.8em]
 &=&
 \left\{\ba{ll}
-\frac{1}{L^2}  e^{-\im( \mu_p+ \mu_q)L}
 ~ \widehat{\rho}(\mu_p, \mu_q),  \qquad p ~ \text{odd} ~ \& ~ q ~ \text{even}, \\[0.8em]
0, \qquad \qquad \qquad \qquad \qquad \qquad \qquad \text{else}.
\ea\right.
\label{rhohatRelaevod}
\eea
Similarly, by plugging \eqref{rhohatRelaevod} into \eqref{LapPhiN1}, $-\Delta \varphi_N$ can be further simplified as
 \beas
 -\Delta \varphi_N =
 \Big(\frac{2 \pi}{2L}\Big)^2 \sum_{p=-N/2}^{N/2-1} \sum_{\substack { q=-N/2+1\\ q  \not=0} }^{N/2-1} f(\nu_{p+\frac{1}{2}}, \nu_{q})
 = \Big(\frac{2 \pi}{2L}\Big)^2 \sum_{p=-N/2}^{N/2-1} \sum_{q=-N/2}^{N/2-1} f(\nu_{p+\frac{1}{2}}, \nu_{q}),
 \eeas
 where the second equality holds because
 $f(\nu_{p+\frac{1}{2}}, \nu_0) = f(\nu_{p+\frac{1}{2}}, \nu_{-N/2})  = 0$.
Then the function $g$ in \eqref{disErrCou} equivalently becomes
\beas
g &=& \int_{R_h}
 f(k_1, k_2) {\rm d} k_1 {\rm d} k_2 -  \Big(\frac{2 \pi}{2L}\Big)^2  \sum_{p=-N/2}^{N/2-1} \sum_{q=-N/2}^{N/2-1} f(\nu_{p+\frac{1}{2}}, \nu_{q}) \\
 &:=& \int_{R_h}
 f(k_1, k_2) {\rm d} k_1 {\rm d} k_2 -
  Q^{\rm{ML}}(f)
  = E^{\rm{ML}}(f).
\eeas
Similar to Case 1, we have
\beas
\big|g \big| &=& \big|E^{\rm{ML}}(f) \big| = \Big| \sum_{p=-\infty}^{\infty} \sum_{q=-\infty}^{\infty} \hat{f}_{pq}~ E^{\rm{ML}} (e^{\im (\lambda_p k_1+ \lambda_q k_2)})\Big| \notag \\
&=&
\Big| -\left(\frac{2 \pi}{h}\right)^2 \sum_{|p|>0}  \sum_{|q|>0} (-1)^{pN}~ \hat{f}_{pN, qN} \Big|
 \lesssim \sum_{|p|>0} \sum_{|q|>0} \Big|  \Delta \varphi(x-\lambda_{pN}, y-\lambda_{qN}) \Big| \notag  \\
&\lesssim& \sum_{|\bp|>0} \frac{1}{|\bx-2 L \bp|^{m+4}}
\lesssim \frac{1}{L^{m+4}}.
\eeas

\vspace{0.5em}

 {\bf{Case} 3:} When $\rho(x,y)$ is odd with respect to
$x$ and even with respect to $y$, i.e., $\rho(-x,y) = -\rho(x,y)$ and $\rho(x,-y) = \rho(x, y)$, the proof is analogous to Case 1 and Case 2. For brevity, the detailed steps are omitted.

\vspace{1em}

 {\bf{Case} 4:} When $\rho(x,y)$ is odd with respect to
both $x$ and $y$, i.e., $\rho(-x,y) = -\rho(x,y)$ and $\rho(x,-y) = -\rho(x, y)$, the proof is analogous to Case 1 and Case 2. For brevity, the detailed steps are omitted.

\vspace{1em}

For general density $\rho$, we  perform an odd-even decomposition, i.e.,
\bea
\label{oddEvenDeco}
\rho(x,y) = \rho^{ee}(x,y) + \rho^{eo}(x,y)+\rho^{oe}(x,y)+\rho^{oo}(x,y),
\eea
where
\beas
\rho^{ee}(x,y) &=& \frac{1}{4} \Big[
\rho(x, y) + \rho(x, -y)+\rho(-x, y)+\rho(-x, -y)
 \Big],  \\
 \rho^{eo}(x,y) &=& \frac{1}{4} \Big[
\rho(x, y) - \rho(x, -y)+\rho(-x, y)-\rho(-x, -y)
 \Big], \\
  \rho^{oe}(x,y) &=& \frac{1}{4} \Big[
\rho(x, y) + \rho(x, -y)-\rho(-x, y)-\rho(-x, -y)
 \Big], \\
   \rho^{oo}(x,y) &=& \frac{1}{4} \Big[
\rho(x, y) - \rho(x, -y)-\rho(-x, y)+\rho(-x, -y)
 \Big].
\eeas
Importantly, the function $\rho^{\alpha \beta}$ with $\alpha, \beta \in \{e, o\}$ retains the properties of smoothness, compact support and moment-matching \eqref{MomVan1}, as dose density $\rho$.
Based on the discussion in Case 1 - Case 4, we have
\bea
|g| &=& \big|E^{\rm{MM}}(f^{ee})+
E^{\rm{ML}}(f^{eo})+
E^{\rm{LM}}(f^{oe}) +
E^{\rm{LL}}(f^{oo}) \big| \notag \\
&\le &
\big|E^{\rm{MM}}(f^{ee})\big|+
\big|E^{\rm{ML}}(f^{eo})\big|+
\big|E^{\rm{LM}}(f^{oe})\big|+
\big|E^{\rm{LL}}(f^{oo})\big|
\notag \\
& \lesssim & \frac{1}{L^{m+4}}.
\label{errdiscre}
\eea
 Therefore, the proof is completed by combing
  \eqref{maxNormw}, \eqref{errBound} and \eqref{errdiscre}.
\end{proof}

\vspace{1em}

\textbf{For the FS-MM method}, our goal is to estimate the error $\| \varphi - \varphi_N\|_{\infty(\Omega)}$, where
$\varphi = U \ast \rho$ and $\varphi_N$ is given by \eqref{SchemeMid}. In fact,
the error primarily comes from the discretization of the Fourier integral via the midpoint  quadrature. We present the following theorem for its rigorous analysis.

\vspace{0.5em}

\begin{thm}[FS-MM method]
\label{thm:ErrMidFou}
For a smooth and compactly supported function $ \rho(\bx)$ that satisfies the vanishing moment condition
\eqref{MomVan1},
the following estimate
\begin{gather*}
\| \varphi- \varphi_N \|_{\infty(\Omega)} \lesssim
\sum_{|\bm{\beta}| = m+1}
  \max_{\begin{subarray}{c}
   \bx \in \partial \Omega  \\
   \by \in  \Omega_0 \\
    0 \le s \le  1
  \end{subarray}} \left| {\partial^{\beta} U(\bx-s \by)  }\right|
\end{gather*}
holds true
where
$\Omega_0:=\supp\{\rho\}   \subset \Omega$.
\end{thm}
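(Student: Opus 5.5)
The plan is to repeat the Fourier-series / aliasing argument from Case 1 of the proof of Theorem~\ref{thm:ErrHomDirSin2DCoulomb}, now applied directly to the potential instead of to $-\Delta\varphi_N$. Fix $\bx\in\Omega$ and set
\[
f(\bk):=\frac{1}{(2\pi)^d}\,\widehat{U}(\bk)\,\widehat{\rho}(\bk)\,e^{\im \bk\cdot\bx}.
\]
The scheme \eqref{SchemeMid} is then exactly the midpoint quadrature $Q^{\rm MM}(f)$ on $R_h=[-\pi/h,\pi/h]^d$ with nodes $\nu_{p+\frac12}$ and spacing $\Delta k=\pi/L$. Consequently
\[
\varphi(\bx)-\varphi_N(\bx)=E^{\rm MM}(f)+\int_{\mathbb{R}^d\setminus R_h} f\,{\rm d}\bk .
\]
As in the Coulomb proof, I will take $h$ fixed and small enough that $\widehat\rho$ is negligible outside $R_h$, so the truncation term is spectrally small and can be dropped. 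The target is then $|E^{\rm MM}(f)|$.

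\textbf{Step 1: Fourier expansion of $f$ and its coefficients.} Expand $f$ on $R_h$ in the Fourier series $\sum_{\bp}\widehat{f}_{\bp}\,e^{\im \lambda_{\bp}\cdot\bk}$ with $\lambda_{\bp}=\bp h$. By the same computation as in that proof,
\[
\widehat{f}_{\bp}=\frac{h^d}{(2\pi)^d}\,\varphi(\bx-\bp h),
\]
since the inverse Fourier transform of $\widehat U\widehat\rho$ is $\varphi$ itself.

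\textbf{Step 2: convergence of the series.} I need the series to converge uniformly so that $E^{\rm MM}$ can be applied term by term. Lemma~\ref{lem:farAsyAna} gives $|\varphi(\bz)|\lesssim \sum_{|\bm\beta|=m+1}|\partial^{\bm\beta}U(\bz)|$ for large $|\bz|$. This yields absolute summability provided that decay beats $|\bz|^{-d}$, which holds for the usual kernels once $m$ is moderately large. Where this fails, I will instead rely on the Fejér/Cesàro convergence argument.

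\textbf{Step 3: aliasing identity.} The midpoint rule applied to the exponential $e^{\im\lambda_{\bp}\cdot\bk}$ has an explicit error: it vanishes unless every component of $\bp$ is a nonzero multiple of $N$, and in that case it equals $-(2\pi/h)^d(-1)^{\cdots}$. Therefore
\[
|E^{\rm MM}(f)|\lesssim \sum_{\bp\ne 0}\big|\varphi(\bx-2L\bp)\big| ,
\]
which is the usual periodic-image sum. For $\bx\in\Omega$ and $\bp\neq0$, every point $\bx-2L\bp$ lies outside $\Omega$ at distance at least comparable to its distance from the origin. I can thus apply the bound \eqref{FarAsymMax} from the proof of Lemma~\ref{lem:farAsyAna}, with the maximum taken along the segment $\bx-2L\bp-s\by$. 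The image nearest to $\Omega$ dominates, since it sits essentially at $\partial\Omega$. The remaining images are controlled by monotone decay of $|\partial^{\bm\beta}U|$ in $|\bz|$ together with the summability of $\sum_{\bp\neq0}|\bp|^{-(m+1+\kappa)}$, where $\kappa$ is the homogeneity degree. Their total is bounded by a constant times the boundary maximum, which is exactly the right-hand side of the theorem.

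\textbf{Main obstacle.} The delicate step is this image summation. The maximum over $\partial\Omega$ in the statement only controls the nearest image directly. To handle the rest, I will assume $|\partial^{\bm\beta}U(\bz)|$ decreases radially like $|\bz|^{-(m+1+\kappa)}$ (as in Table~\ref{tab:DiffAsyKernel}), and compare image $\bp$ with the boundary value via the bound $|\bx-2L\bp-s\by|\gtrsim |\bp|L$. The sum then converges whenever $m+1+\kappa>d$, and I will state that condition as the implicit requirement on $m$. A second, smaller point is the singularity of $\widehat U$ at the origin, which makes $f$ only piecewise smooth. This does not obstruct the argument, because the moment vanishing \eqref{TayExp} cancels the singularity and leaves $f$ continuous with the decay above. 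That is precisely the mechanism behind advantage (b) in \eqref{AdvanMomVan}.
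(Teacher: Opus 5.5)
Your argument holds at the paper's level of rigor, but it takes a more direct route than the paper. The paper reuses the whole machinery of Theorem~\ref{thm:ErrHomDirSin2DCoulomb}. It splits $\rho$ into $\rho^{ee},\rho^{eo},\rho^{oe},\rho^{oo}$ and bounds $E^{\rm MM}(f^{ee})$, $E^{\rm ML}(f^{eo})$, $E^{\rm LM}(f^{oe})$, $E^{\rm LL}(f^{oo})$ by the Case 1--4 aliasing estimates. It then converts the mixed midpoint/left-rectangle errors back to pure midpoint errors through $Q^{\rm MM}_{\rm NN}+Q^{\rm ML}_{\rm NN}=2Q^{\rm ML}_{\rm N(2N)}$.

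You instead observe that \eqref{SchemeMid} is already a product midpoint rule on $R_h$. The Case~1 aliasing computation therefore applies to the full $f$, with no symmetry assumption on $\rho$. The parity split is only forced in the sine method, where odd reflection produces half-integer or integer frequency nodes depending on the parity of $\rho$. Your version yields the explicit anti-periodic image identity $\varphi-\varphi_N=-\sum_{\mathbf{j}\neq 0}(-1)^{j_1+\cdots+j_d}\varphi(\bx-2L\mathbf{j})$ (up to truncation), and it makes the role of Lemma~\ref{lem:farAsyAna} transparent. The paper's detour buys the midpoint/rectangle error relations, which it reuses to justify collapsing the four SP tensors to $T^{ee}$ in the SP-Tensor method.

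There is one small slip and one caveat on the restriction you isolate.

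\textbf{Aliasing condition.} The midpoint error of $e^{\im\lambda_{\bp}\cdot\bk}$ is nonzero for every $\bp\in N\mathbb{Z}^d\setminus\{0\}$, including, e.g., $\bp=(0,N)$. It is not restricted to the case where all components are nonzero multiples of $N$; the paper's \eqref{ErrMidExp} has the same slip. Your final sum over all $\bp\neq 0$ is correct regardless.

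\textbf{The condition $m+1+\kappa>d$.} This condition is genuinely needed, both for uniform convergence of the Fourier series and for the absolute image sum. For the 3D Poisson kernel with $m=0$, one even has $f\sim|\bk|^{-1}$ near $\bk=0$, so the claim that moment vanishing leaves $f$ continuous also requires $m$ large enough. The Fej\'er fallback does not remove the restriction: Cesàro means only reproduce the same alternating lattice sum, and its absolute version diverges in exactly these cases. Covering small $m$ would require exploiting the sign cancellation in $(-1)^{j_1+\cdots+j_d}$.

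The paper's ``similar to Theorem~\ref{thm:ErrHomDirSin2DCoulomb}'' argument tacitly needs the same condition. There the coefficients were $\Delta\varphi$, decaying like $|\mathbf{z}|^{-(m+4)}$; here they are $\varphi$ itself. Likewise, your ``negligible'' truncation term plays the role of the paper's assumption $\supp\{\widehat\rho\}\subsetneq R_h$. Neither point is a deficiency relative to the paper.
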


\vspace{0.5em}

\begin{proof}
Since $\widehat{\rho}$ is compactly supported,
 it is reasonable to choose a fixed $h$ such that $\supp \{\widehat{\rho} \} \subsetneq  R_h := [-\pi/h, \pi/h]^d $.
 Therefore, the error comes from the discretization of the integral using the midpoint quadrature, i.e.,
 \beas
 \varphi-\varphi_N = I(f) -  Q^{\rm{M}}_{\rm{N}}(f)=
 E^{\rm{M}}_{\rm{N}}(f),
 \eeas
where $f(\bk) =\widehat{U}(\bk) \widehat{\rho}(\bk) e^{\im \bk \cdot \bx}$,
$I(f)$ represents the integral of the function $f$ over $R_h$, $Q^{\rm{M}}_{\rm{N}}(f)$ denotes the midpoint quadrature with $N$ points in each direction and $E^{\rm{M}}_{\rm{N}}(f)$ represents the corresponding error.

Similar to the proof in Theorem \ref{thm:ErrHomDirSin2DCoulomb}, by taking the 2D case as an example, we can obtain
    \beas
  && \big| E^{\rm{MM}}_{\rm{NN}}(f^{ee})\big|
   \lesssim
  \max_{\begin{subarray}{c}
   \bx \in \partial \Omega  \\
   \by \in  \Omega_0 \\
    0 \le s \le  1
  \end{subarray}} \left| {\partial^{\beta} U(\bx-s \by)  }\right|,  \quad
   \big| E^{\rm{ML}}_{\rm{NN}}(f^{eo})\big|
   \lesssim
  \max_{\begin{subarray}{c}
   \bx \in \partial \Omega  \\
   \by \in  \Omega_0 \\
    0 \le s \le  1
  \end{subarray}} \left| {\partial^{\beta} U(\bx-s \by)  }\right|, \\
  && \big| E^{\rm{LM}}_{\rm{NN}}(f^{oe})\big|
   \lesssim
  \max_{\begin{subarray}{c}
   \bx \in \partial \Omega  \\
   \by \in  \Omega_0 \\
    0 \le s \le  1
  \end{subarray}} \left| {\partial^{\beta} U(\bx-s \by)  }\right|, \quad
   \big| E^{\rm{LL}}_{\rm{NN}}(f^{oo})\big|
   \lesssim
  \max_{\begin{subarray}{c}
   \bx \in \partial \Omega  \\
   \by \in  \Omega_0 \\
    0 \le s \le  1
  \end{subarray}} \left| {\partial^{\beta} U(\bx-s \by)  }\right|.
    \eeas
 With the help of the relation between midpoint rule and left rectangle rule, i.e.,
 \beas
 Q^{\rm{MM}}_{\rm{NN}}(f) +  Q^{\rm{ML}}_{\rm{NN}}(f) =
 2  Q^{\rm{ML}}_{\rm{N(2N)}}(f),
 \eeas
 we have
 \beas
 E^{\rm{MM}}_{\rm{NN}}(f) &=& I(f) - Q^{\rm{MM}}_{\rm{NN}}(f)
  = I(f) - \Big[
 2  Q^{\rm{ML}}_{\rm{N (2N)}}(f) -Q^{\rm{ML}}_{\rm{NN}}(f) \Big] \\
 &=& 2 \Big[ I(f) -  Q^{\rm{ML}}_{\rm{N (2N)}}(f)  \Big]
 -\Big[I(f) -Q^{\rm{ML}}_{\rm{NN}}(f)  \Big] \\
 &=& 2 E^{\rm{ML}}_{\rm{N (2N)}}(f) -
 E^{\rm{ML}}_{\rm{N N}}(f).
 \eeas
 In a similar way, we obtain
 \beas
  E^{\rm{MM}}_{\rm{N N}}(f) =  2 E^{\rm{LM}}_{\rm{(2N)N}}(f)-
 E^{\rm{LM}}_{\rm{N N}}(f), \qquad
  E^{\rm{ML}}_{\rm{N N}}(f) =  2 E^{\rm{LL}}_{\rm{(2N) N}}(f)-
 E^{\rm{LL}}_{\rm{N N}}(f).
 \eeas
 Therefore, we have
 \beas
 \big| \varphi-\varphi_N \big| =
 \big|  E^{\rm{MM}}_{\rm{N N}}(f) \big|
 &\le&
 \big|  E^{\rm{MM}}_{\rm{N N}}(f^{ee}) \big| +
\big|  E^{\rm{MM}}_{\rm{N N}}(f^{eo}) \big| +
\big|  E^{\rm{MM}}_{\rm{N N}}(f^{oe}) \big| +
\big| E^{\rm{MM}}_{\rm{N N}}(f^{oo}) \big| \\
&\lesssim&
\big| E^{\rm{MM}}_{\rm{N N}}(f^{ee}) \big| +
\big|  E^{\rm{ML}}_{\rm{N N}}(f^{eo}) \big| +
\big| E^{\rm{LM}}_{\rm{N N}}(f^{oe}) \big| +
\big|  E^{\rm{LL}}_{\rm{N N}}(f^{oo}) \big|  \\
&\lesssim &
\sum_{|\bm{\beta}| = m+1}
\max_{\begin{subarray}{c}
   \bx \in \partial \Omega  \\
   \by \in  \Omega_0 \\
    0 \le s \le  1
  \end{subarray}} \left| {\partial^{\beta} U(\bx-s \by)  }\right|.
 \eeas
 Then the proof is completed.
\end{proof}

\section{Numerical results} \label{sec:NumRes}
In this section, we shall investigate the accuracy and efficiency for different nonlocal potentials in both 2D and 3D cases.  The computational domain $\bR_L$ is discretized uniformly in each spatial direction with mesh size $h_j$, and we define mesh size vector as $\bh = (h_1, \dots, h_d)$.
For simplicity, we shall use $h$ to denote the mesh size if all the mesh sizes are equal.
All numerical errors are calculated in the relative maximum norm, defined as follows
\begin{align*}
\mathcal{E}:
=\max_{\bx \in \mathcal T_{\bh}} |\varphi(\bx)-\varphi_{\bh}(\bx)|/\max_{\bx \in \mathcal T_{\bh}} |\varphi(\bx)|,
\end{align*}
where  $\varphi_{\bh}$ is the numerical solution on mesh grid $\mathcal{T}_{\bh}$ and $\varphi $ is the reference solution.

Unless otherwise specified,
 we use the auxiliary function \eqref{AuxilFunc} with $\sigma =2$ and the density functions
 $\rho(\bx) = e^{-|\bx-\bx_0|^2/4} $,
 where $\bx_0 = (1,2)$ for $d=2$ and
 $\bx_0 = (1, 2, 3)$ for $d=3$.
The solution obtained by KTM \cite{FastConvGreengard}  is taken as the reference solution.
 In practice, we choose  computational domain $\mathbf{R}_{L} = [-16,16]^d$ and mesh size $h=1/4$.

\subsection{The Poisson potentials in 2D/3D}

\begin{exmp}[\textbf{Accuracy test}]
\label{Exmp:PoissonAccu}
Here, we consider  the 2D and 3D Poisson potentials with convolution kernel
   \begin{equation*}
      U(\bx) =  \left\{
         \begin{array}{ll}
           -\frac{1}{2\pi} \ln(|\bx|),     & d=2,\\[0.5em]
            \frac{1}{4\pi |\bx|},   & d=3.
         \end{array}
      \right.
   \end{equation*}
\end{exmp}

 Figure \ref{Fig:PoissonAccu} shows the  errors and convergence orders with respect to the domain expansion factor $S$. From these results, we can see that the convergence order is at least $ m+1$ for the 2D Poisson potential and $m+2$ for the 3D Poisson potential.  It is compatible with theoretical error estimates and performs better for certain $m$.

\begin{figure}[!htpb]
\centering
\includegraphics[scale=0.44]{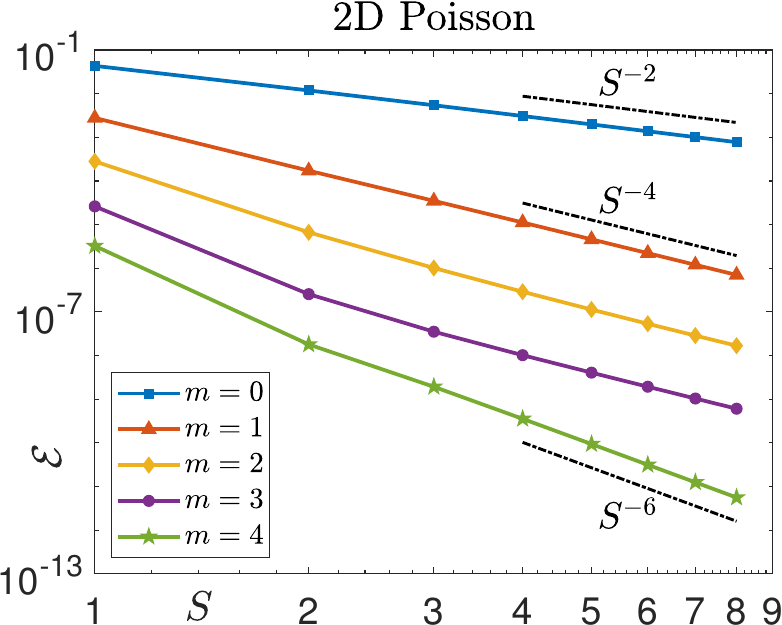}
\hspace{0.5em}
\includegraphics[scale=0.44]{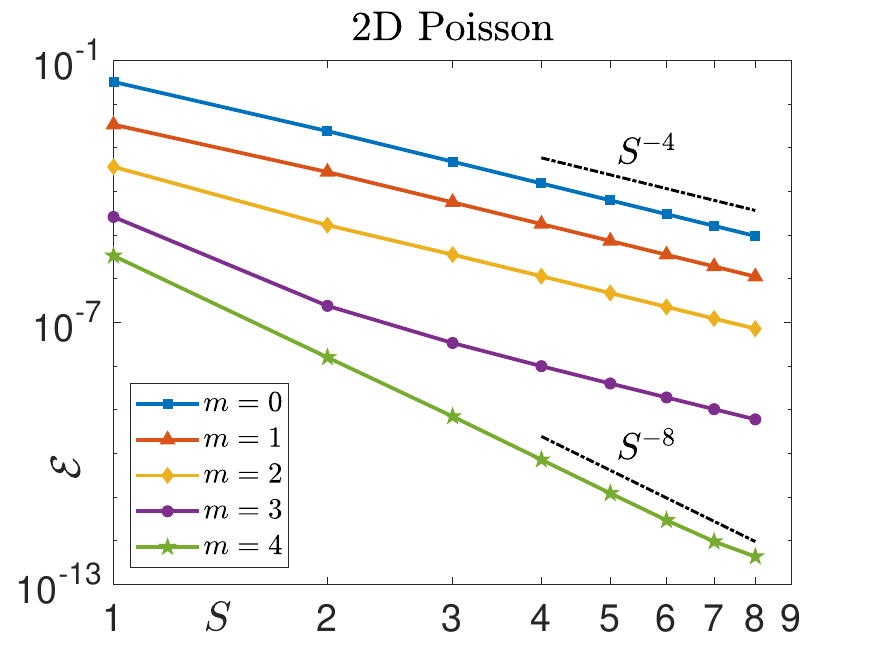} \\[0.5em]
\includegraphics[scale=0.44]{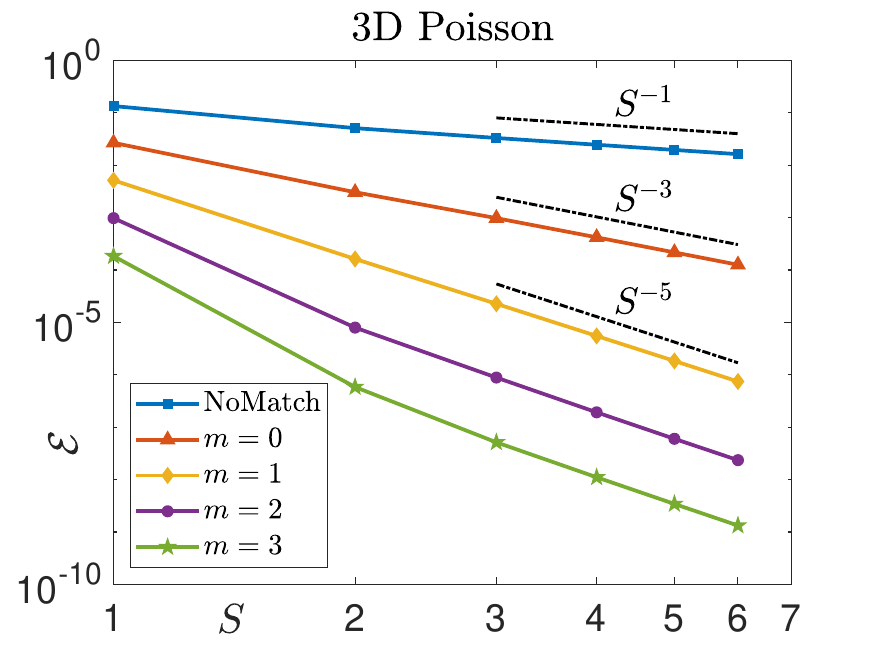} \hspace{-0.7em}
\includegraphics[scale=0.44]{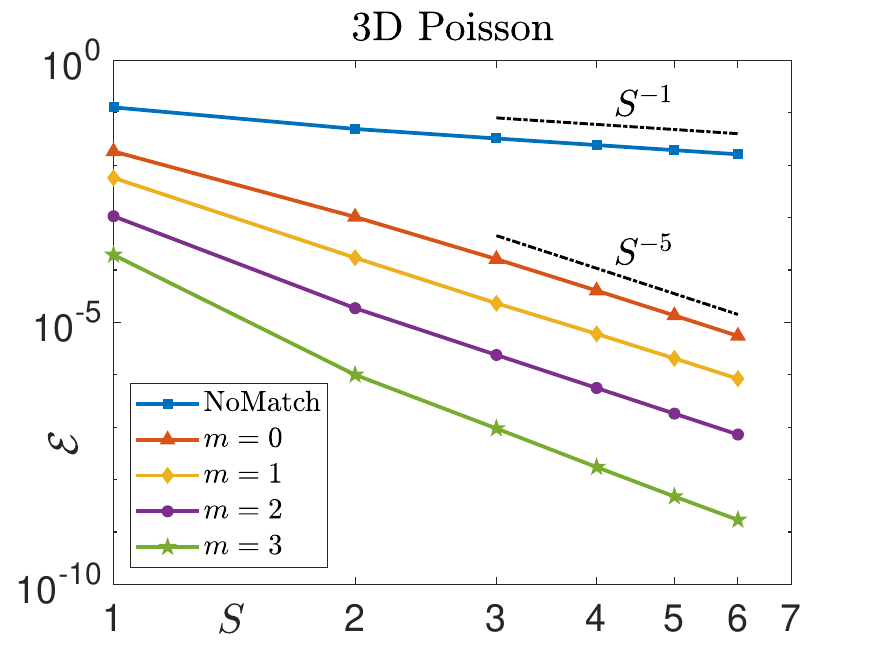} \\
\caption{Errors of the $\mathrm{SP\text{-}MM}$ (left) and  $\mathrm{FS\text{-}MM}$ (right) methods for Poisson potentials. }
\label{Fig:PoissonAccu}
\end{figure}

\begin{exmp}[\textbf{Efficiency test}]
\label{Exmp:PoissonEffi}
To investigate the efficiency, we present the performance of $\mathrm{SP\text{-}MM}$ and $\mathrm{FS\text{-}MM}$ methods with/without tensor acceleration in terms of CPU time, where we choose the 3D Poisson potentials with $m=2$.
The computation of tensor version is split into two parts: the precomputation part ({\bf PreComp}) and the execution part ({\bf Execution}).
The algorithms were implemented
in Matlab (2016a) and run on a 3.00GH Intel(R) Xeon(R) Gold 6248R CPU with a 36 MB cache in Ubuntu GNU/Linux.
\end{exmp}

\begin{figure}
\centering
\includegraphics[width=0.495\textwidth]{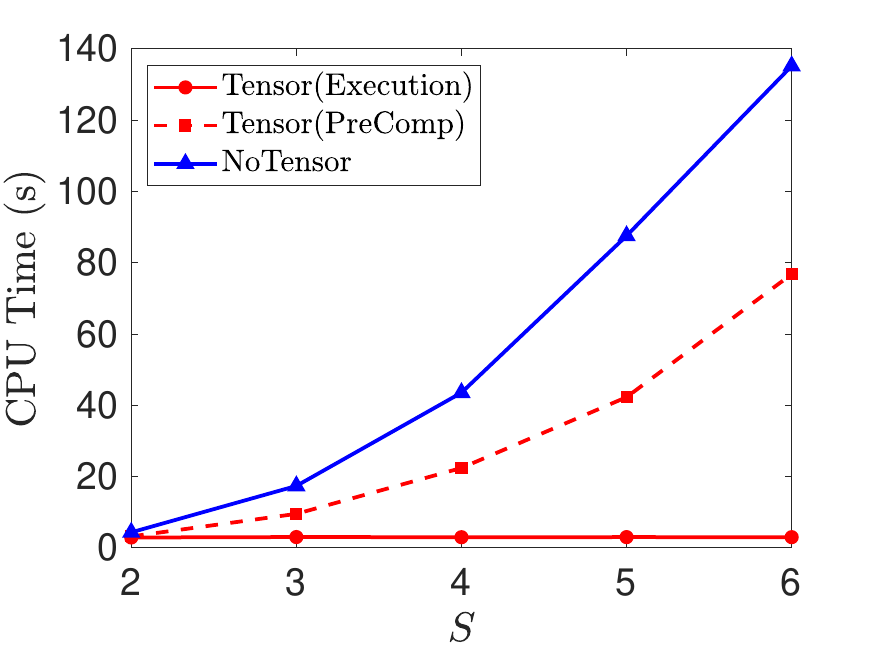}
\includegraphics[width=0.495\textwidth]{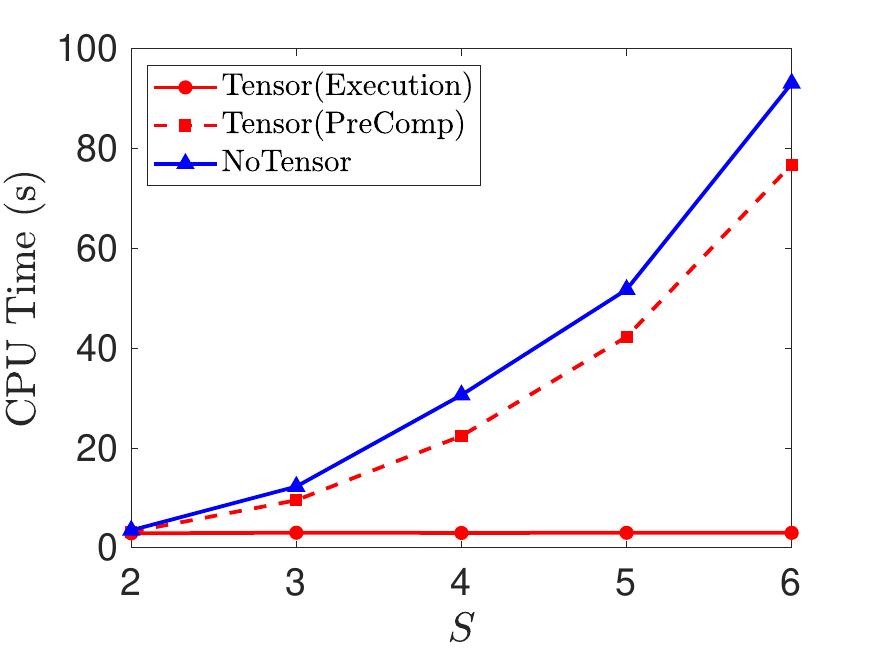}
\caption{Timing results of $\mathrm{SP\text{-}MM}$
 (left) and  $\mathrm{FS\text{-}MM}$ (right) methods, with/without tensor acceleration, versus domain expansion factor $S$. }
\label{TimeTest}
\end{figure}

In Figure \ref{TimeTest}, we present the CPU time for different domain expansion factor $S$.
We can conclude that, for  tensor version, once the precomputation step is done, the effective computation involves only a pair of FFT/iFFT on vectors of twice the length in each direction, regardless of the magnitude of domain expansion factor $S$.
This is of significant importance in practical simulation, especially when the potential evaluation needs to be performed multiple times under the same setups.

\subsection{The 2D Coulomb potentials and 3D DDI}
\begin{exmp}
\label{Exmp:CoulombAccu}
Here, we consider  the following potentials
  \begin{itemize}[itemsep=8pt, topsep=4pt]
  \item {\bf 2D Coulomb potential}:
  The kernel is given as $U(\bx) =\frac{1}{2 \pi |\bx|} $.
\item {\bf 3D DDI}:  The kernel is given as
\beas
 U(\mathbf{x})  =
  \frac{3}{4 \pi} \frac{\textbf{m} \cdot \textbf{n} - 3 (\mathbf{x} \cdot \textbf{m}) (\mathbf{x} \cdot \textbf{n})/|\mathbf{x}|^2 }{|\mathbf{x}|^3},
\eeas
where $\textbf{n}$, $\textbf{m} \in \mathbb{R}^3$ are unit vectors representing the dipole orientations, and
the 3D potential is reformulated as
\cite{BaoBECRev}:
  \beas\
  \varphi(\bx)= -(  \textbf{m} \cdot \textbf{n} ) \rho(\bx)-3 \frac{1}{4 \pi |\bx|} \ast (\partial_{\textbf{n} \textbf{m}} \rho),
\eeas
where $\partial_{\textbf{m}} = \textbf{m} \cdot  \nabla $ and $ \partial_{\textbf{n}\textbf{m}}= \partial_{\textbf{n}} (\partial_{\textbf{m}})$.
In fact, the potential can be calculated via the 3D Poisson potential with source term $\partial_{\textbf{n}\textbf{m}} \rho$,
which can be easily computed numerically via Fourier spectral method.
\end{itemize}
\end{exmp}

 Figure \ref{Fig:CoulombDDIAccu} presents the  errors and convergence orders with respect to the domain expansion factor $S$.
 The results indicate that the convergence order is at least $ m+2$, i.e., $\mathcal{O}(S^{-(m+2)})$, for both 2D Coulomb potential and 3D DDI.
For the 3D DDI, results obtained by SP method and SP-MM method with $m=0,1$ are identical to each other, and it is because moments of $\partial_{\textbf{n}\textbf{m}} \rho$  vanish up to order $1$ automatically.

	To demonstrate the numerical performance of the proposed methods, we compare them with the well-established KTM and ATKM methods in terms of implementation, accuracy and efficiency. Among these methods, our methods are the easiest to implement.
Figure \ref{Fig:Compare} presents log-log plots of computational time versus errors for the 2D Coulomb potential with density
$\rho = e^{-4(x^2+y^2)}$. From these results, we observe that when the accuracy requirement is not very high (e.g., $ \ge 10^{-11}$), our methods are more efficient.

\begin{figure}[!htpb]
\centering
\includegraphics[scale=0.44]{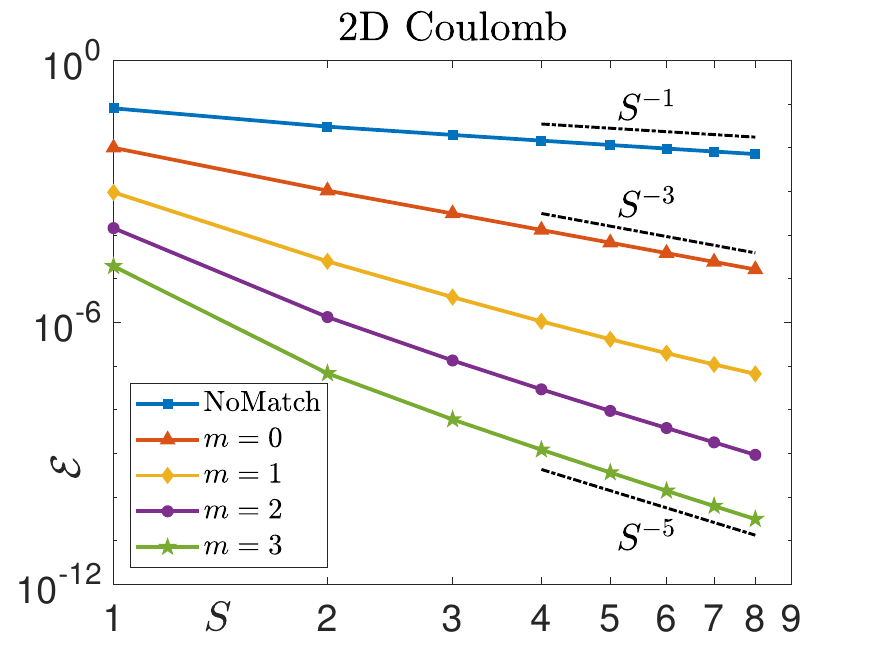}
\hspace{-1em}
\includegraphics[scale=0.44]{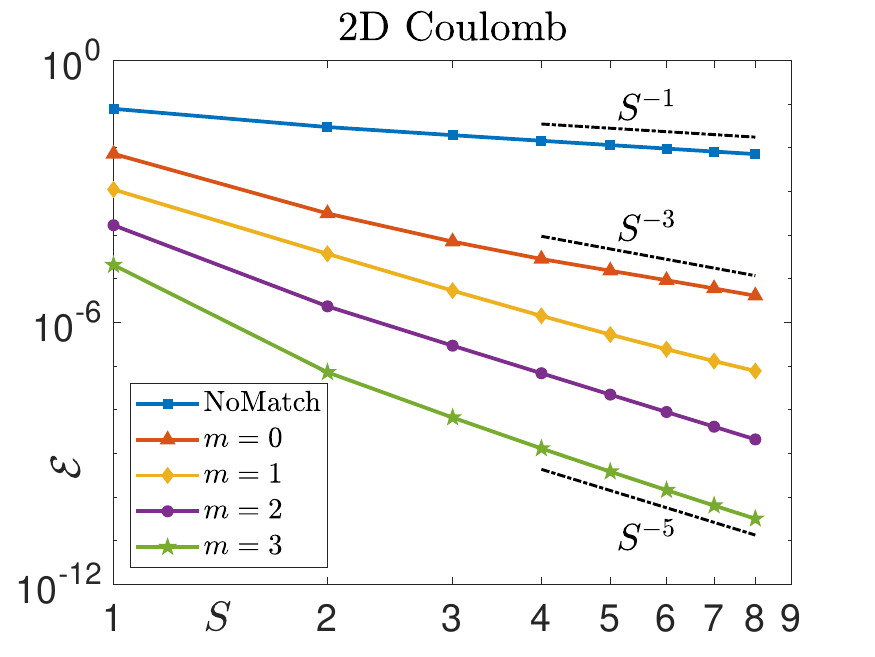} \\[0.5em]
\includegraphics[scale=0.44]{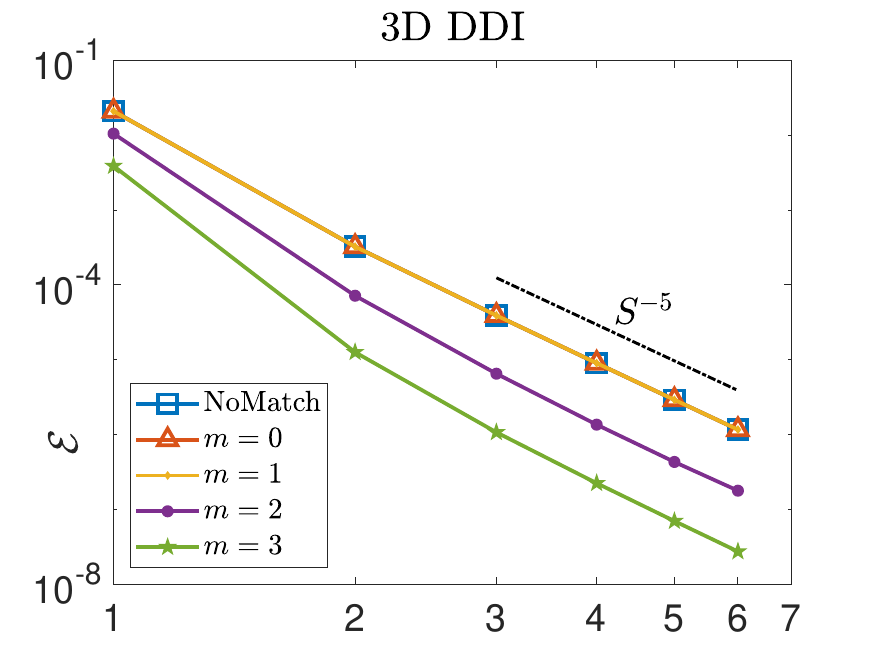} \hspace{-1em}
\includegraphics[scale=0.44]{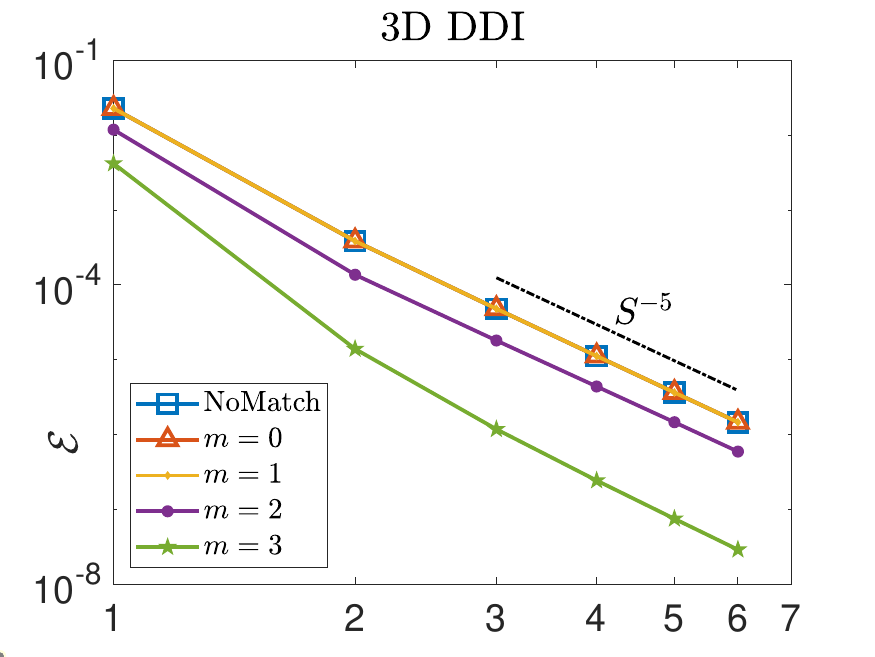} \\
\caption{Errors of $\mathrm{SP\text{-}MM}$ (left) and  $\mathrm{FS\text{-}MM}$ (right) methods for Coulomb potentials and DDI. }
\label{Fig:CoulombDDIAccu}
\end{figure}

\begin{figure}[!htpb]
	\centering
	\includegraphics[scale=0.5]{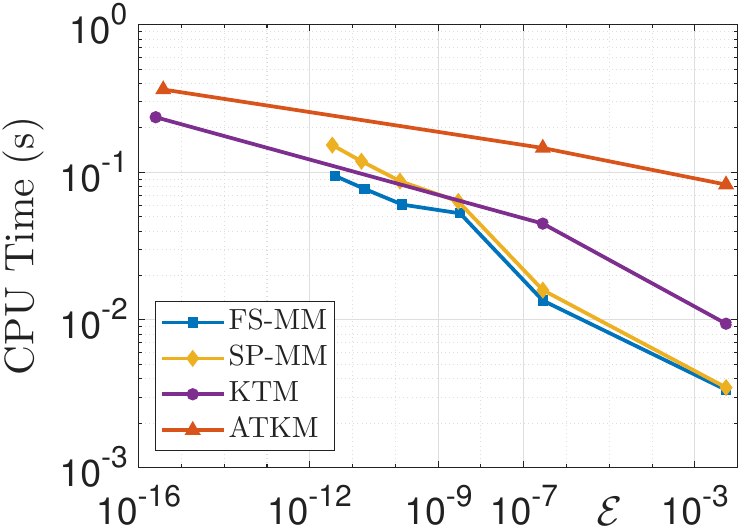}
	\caption{Log-log plots of timing results versus errors for Coulomb potential in Example \ref{Exmp:CoulombAccu}.  }
	\label{Fig:Compare}
\end{figure}

\subsection{The Biharmonic potentials in 2D/3D}

\begin{exmp}\label{exmp:Biharmonic}
Here, we consider the 2D and 3D Biharmonic potentials with convolution kernel
\begin{equation*}
      U(\mathbf{x}) =
      \left\{\begin{array}{ll}
-\frac{1}{8\pi} |\mathbf{x}|^2 \left(\ln(|\mathbf{x}|)-1\right),  &d=2,  \\[0.5em]
\frac{1}{8\pi} |\mathbf{x}|, & d=3. \\
\end{array}\right.
   \end{equation*}
\end{exmp}

Figure \ref{Fig:BiharmonicAccu} shows the errors and convergence orders with respect to the domain expansion factor $S$. From these results, we can see that the convergence order is at least $ m-1$ for the 2D Biharmonic potential and $m$ for the 3D Biharmonic potential, which is consistent with the theoretical results.

\begin{figure}[!htpb]
\centering
\includegraphics[scale=0.44]{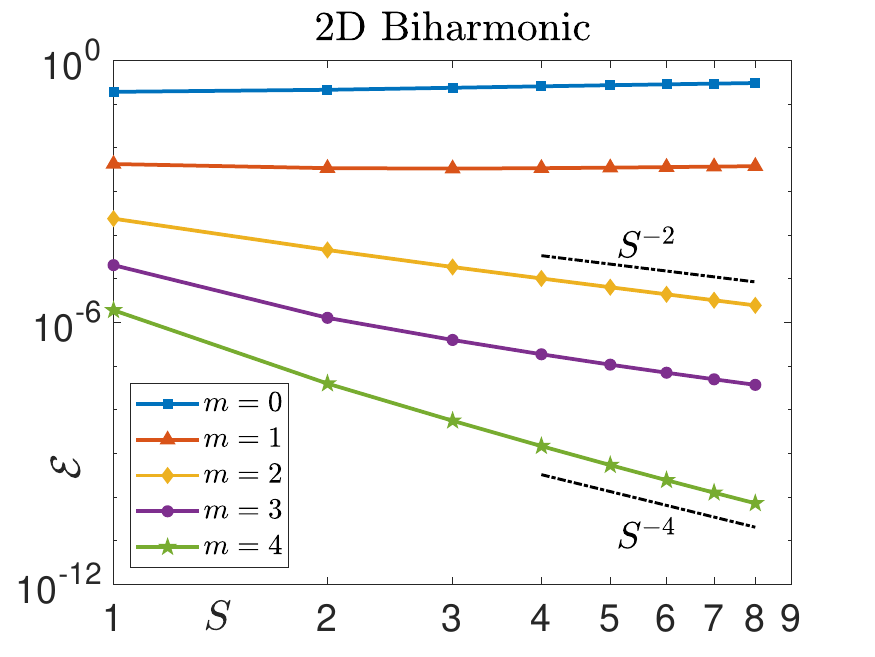}
\hspace{-1em}
\includegraphics[scale=0.44]{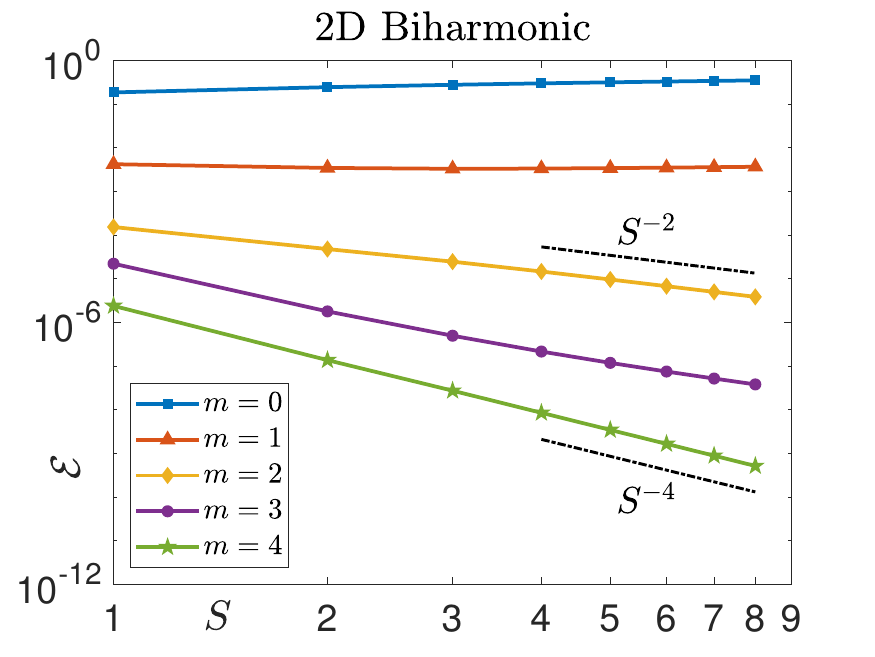} \\[0.5em]
\includegraphics[scale=0.44]{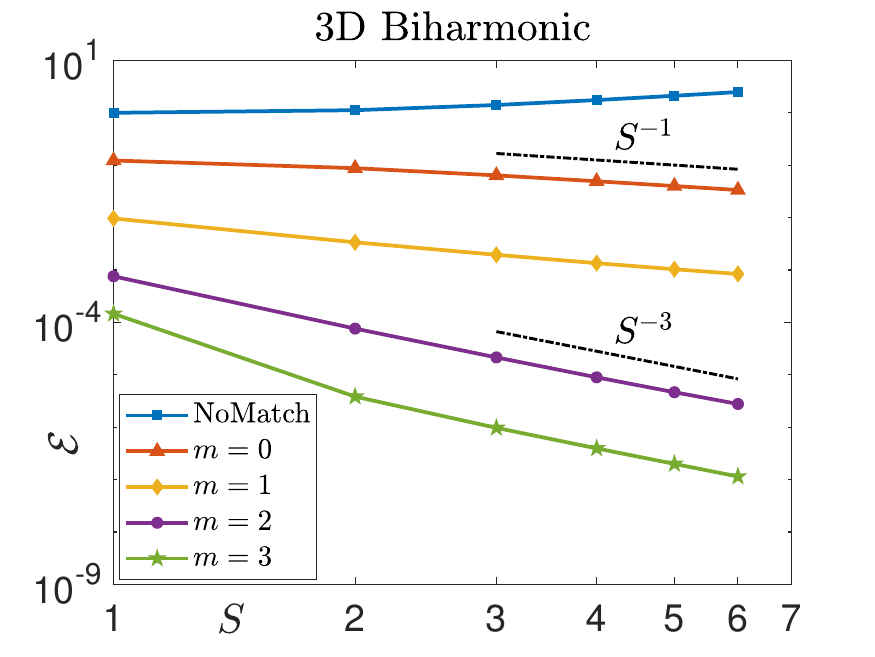} \hspace{-1em}
\includegraphics[scale=0.44]{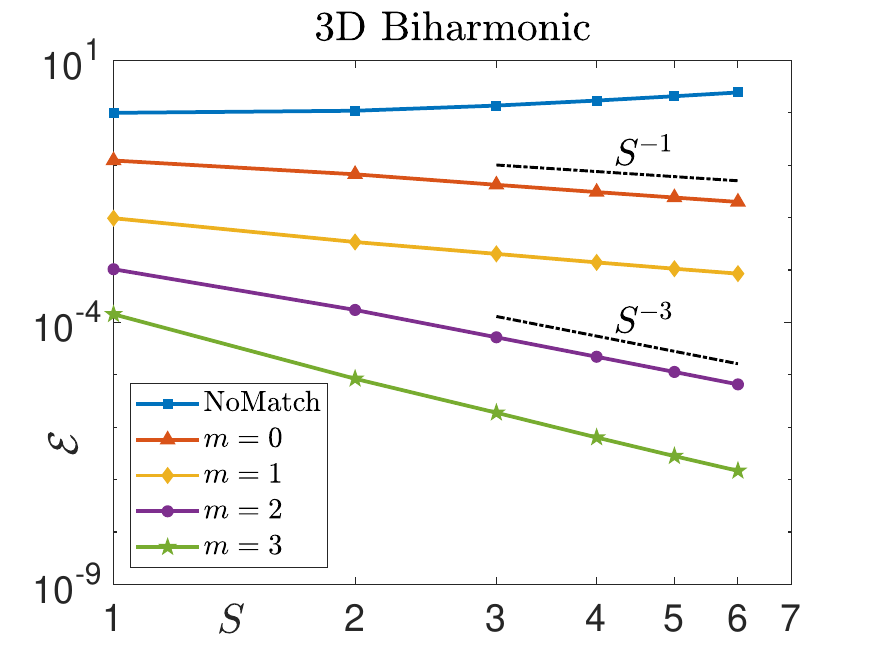} \\
\caption{Errors of the $\mathrm{SP\text{-}MM}$ (left) and   $\mathrm{SP\text{-}Tensor}$ (right) methods for Biharmonic potentials. }
\label{Fig:BiharmonicAccu}
\end{figure}

\subsection{The Yukawa potentials in 2D/3D}

\begin{exmp}\label{exmp:Yukawa}
Here, we consider the 2D and 3D Yukawa potentials with convolution  kernel
\begin{equation*}
      U(\mathbf{x}) =
      \left\{\begin{array}{ll}
\frac{1}{2 \pi} \operatorname{K_0} (\lambda |\mathbf{x}|),  &d=2,  \\[0.5em]
\frac{1}{4 \pi |\mathbf{x}|}e^{-\lambda |\mathbf{x}|}, & d=3, \\
\end{array}\right.
   \end{equation*}
   where $\operatorname{K}_0(r)$ is modified Bessel functions of the second kind with order 0 \cite{Handbook}.
For a Gaussian density $\rho( \bx ) =e^{-|\bx|^2/ \sigma ^2}$, the Yukawa potentials are given explicitly as \cite{atkm}
\beas
\varphi(\bx) =
  \left\{\begin{array}{ll}
 \int_0^{\infty} \operatorname{K}_0(\lambda s ) s ~e^{-\frac{r^2+s^2}{\sigma^2}}~ \operatorname{I}_0 \Big(\frac{2 r s }{\sigma^2}\Big)~ {\rm d} s
  , & d = 2,\\[0.5em]
 (\sqrt{\pi} \sigma)^3 ~ \frac{e^{-\lambda r + \frac{\lambda^2 \sigma^2}{4}}}{8 \pi r}
\left[ \operatorname{erfc}
\left( -\frac{r}{ \sigma} + \frac{\lambda \sigma}{2}
\right) -e^{2 \lambda r} \operatorname{erfc} \left( \frac{r}{\sigma} + \frac{\lambda \sigma}{2} \right)
\right],
      & d= 3,
         \end{array}\right.
\eeas
where  $\operatorname{I}_0(r)$ is the modified Bessel function of order zero and
 $\operatorname{erfc}(r) = 1- \frac{2}{\sqrt{\pi}} \int_{0}^{r} e^{-t^2} \rm{d}t $ is the complementary error function \cite{Handbook}.

\end{exmp}

Note that the kernel $\operatorname{K}_0(\lambda r)$ and $e^{-\lambda r}/r$ decay exponentially to zero at infinity and the decay rate grows as $|\lambda|$ gets larger. This property makes the SP method highly effective.
Table \ref{tab:YukawaAccu} presents the errors with different $\lambda$ and $S$.
 The results demonstrate that, with the help of the expanding domain technique, this method can readily achieve high accuracy.

\begin{table}
\centering
\caption{Errors of  2D/3D Yukawa potentials in Example \ref{exmp:Yukawa}.}
\label{tab:YukawaAccu}
\begin{tabular}{ccccc}
\toprule
 & & $ S = 1 $ & $S = 2$ & $S=3 $  \\
  \midrule
\multirow{3}{*}{2D}
 & $\lambda = 1$ & 1.9058E-07 & 3.7234E-16 & 3.7234E-16  \\
  & $\lambda = 2$    & 5.3590E-13 & 3.5729E-16 & 4.0353E-16  \\
    & $\lambda = 3$    & 4.2084E-16 & 3.6387E-16 & 4.1278E-16  \\
 \midrule
\multirow{3}{*}{3D}
 & $\lambda = 1$  & 8.2316E-08&  4.0121E-16 & 6.8779E-16   \\
 & $\lambda = 2$  & 2.8871E-13&  3.5191E-15 &  3.5191E-15  \\
   & $\lambda = 3$    & 6.3361E-15 & 6.3361E-15 & 6.6241E-15  \\
\bottomrule
\end{tabular}
\end{table}

\section{Conclusions} \label{Conclusion}
We proposed two simple and accurate fast algorithms to compute the convolution potential based on moment-matching for the density.
Each method requires merely minor modifications to popular sine pseudospectral (SP)/Fourier spectral (FS) method
and achieves arbitrary high order convergence.
To this end, we split the potential into two part, i.e., $\varphi = U \ast \rho_1 +  U \ast (\rho-\rho_1):= \varphi_1+\varphi_2$,
where the moments of auxiliary function $\rho_1$ match those of $\rho$ up to order $m$.
The auxiliary function $\rho_1$ is constructed as a linear combination of Gaussian and its derivatives,
and potential $\varphi_1$ can be computed analytically or integrated numerically with ease.
For the smooth residual density $\rho_2 = \rho-\rho_1$, its moments vanish up to order $m$
and the Fourier transform $\widehat{\rho_2}(\bk)$ decays exponentially fast at the far field.

In the first method, one solves a differential/pseudo-differential equation with homogeneous Dirichlet boundary conditions using SP.
While, in the second method, we discretize the Fourier integral by trapezoidal/midpoint rule.
Rigorous error estimates were provided to confirm the $\mathcal{O}(L^{-p})$ convergence on domain $\Omega =[-L,L]^d$ for both methods,
where the integer $p$ depends on order $m$ and the kernel $U$. To further improve the numerical accuracy and efficiency,
we employ the domain expansion technique and simplify each resulting quadrature into one discrete convolution.
The execution finally boils down to discrete Fast Fourier transform on a double-sized vector,
which is of essential importance for long-time simulation in terms of efficiency,
especially when the potential is evaluated multiple times under the same setups.


\section*{Acknowledgements}
This
work was partially supported by the National Key R\&D Program of  China (No. 2024YFA1012803), the National Natural Science Foundation of China (No. 11971335) and the Institutional Research Fund from Sichuan University (No. 2020SCUNL110) (Q. Tang), and the National Natural Science Foundation of China (No.12271400) and Tianyuan Mathematical center in Southwest China (No. 12226102) (X. Liu and
Y. Zhang).
\appendix
\section{Coefficients $\texorpdfstring{\gamma_{\bm{\alpha}}}{\gamma_{\alpha}}$}
\label{coefficient}
\renewcommand{\theequation}{A.\arabic{equation}}
\setcounter{equation}{0}
\setcounter{table}{0}
\renewcommand{\thetable}{A\arabic{table}}

 Specifically, the coefficients $\gamma_{\balpha}$ are determined by the linear system \eqref{LineEqu}.
For the sake of simplicity, we define $H^{\balpha}_{\bbeta} : =  \int_{\mathbb{R}^d} \frac{\partial^{\balpha} G(\bx)}{\partial \bx^{\balpha}} \bx^{\bbeta} \rm{d} \bx$ and $P_{\bbeta}:= \int_{\mathbb{R}^d} \rho(\bx) \bx^{\bbeta} \rm{d} \bx$, then, the linear system \eqref{LineEqu} can be rewritten as
\bea
\label{SimplifyCoef}
\sum_{|\balpha|=0}^{m} H^{\balpha}_{\bbeta} ~ \gamma_{\balpha}=P_{\bbeta},  ~~~~~ |\bbeta|=0,\dots,m.
\eea
 Here, $H^{\bm{\alpha}}_{\bm{\beta}}$ can be evaluated analytically
	and $P_{\bm{\beta}}$ can be well approximated using the trapezoidal quadrature.
Given a fixed $m$, we need to compute $\frac{(m+d)!}{m!d!}$ coefficients for $d=1, 2, 3$.
Here, we claim that matrix $H^{\balpha}_{\bbeta}$ is lower triangular if the auxiliary  function $G(\bx)$  is symmetric and compactly supported.
This observation suggests the solution to \eqref{SimplifyCoef} can be obtained by back substitution.
 Table \ref{AppCoeff2D} and Table \ref{AppCoeff3D} list the explicit formulas for coefficients $\gamma_{\balpha}$.
 Taking the 2D case as an example, the matrix $H^{\balpha}_{\bbeta}$ has  the following  properties.
\vspace{0.5em}
\begin{prop} \label{PropMatrix}
Denote the coefficient  matrix by
 \beas
 H^{(\alpha_1, \alpha_2)}_{(\beta_1, \beta_2)}=\int_{\mathbb{R}^2} \frac{\partial^{(\alpha_1 + \alpha_2)} G(\bx)}{\partial x_1 ^{\alpha_1} x_2 ^{\alpha_2}} x_1^{\beta_1} x_2^{\beta_2}  {\rm{d}} x_1  {\rm{d}} x_2.
  \eeas
  If the function  $G(\bx)$ is symmetric and compactly supported, the following properties
  \vspace{0.2em}
\begin{enumerate}[label=(\roman*)]
  \item If $\alpha_i + \beta_i $ is odd for some $i$, then $H^{(\alpha_1, \alpha_2)}_{(\beta_1, \beta_2)} = 0$,
  \item Symmetry:  $H^{(\alpha_1, \alpha_2)}_{(\beta_1, \beta_2)} = H^{(\alpha_2, \alpha_1)}_{(\beta_2, \beta_1)}$,
  \item  Recurrence formula:  $H^{(\alpha_1, \alpha_2)}_{(\beta_1, \beta_2)} = - \beta_1 H^{(\alpha_1-1, \alpha_2)}_{(\beta_1-1, \beta_2)}$,
  \item If $\alpha_i > \beta_i $ for some $i$, then  $H^{(\alpha_1, \alpha_2)}_{(\beta_1, \beta_2)} = 0$,
\end{enumerate}
\vspace{0.2em}
hold true.
\end{prop}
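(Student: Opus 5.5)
The natural tool is integration by parts in one variable at a time, together with the parity of $G$. Here ``symmetric'' is taken to mean that $G$ is even in each coordinate separately, $G(\pm x_1,\pm x_2)=G(x_1,x_2)$, and also invariant under the swap $(x_1,x_2)\mapsto(x_2,x_1)$. The Gaussian in \eqref{AuxilFunc} is radial, so it has both properties. Compact support (or, for the Gaussian, rapid decay) makes every boundary term in the integrations by parts vanish.

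\textbf{Property (i).} Since $G$ is even in $x_i$, the derivative $\partial_{x_i}^{\alpha_i}G$ has parity $(-1)^{\alpha_i}$ in $x_i$. Multiplying by $x_i^{\beta_i}$ gives an integrand of parity $(-1)^{\alpha_i+\beta_i}$ in $x_i$; the remaining factor in the other variable does not affect this. If $\alpha_i+\beta_i$ is odd, the integrand is odd in $x_i$. Integrating first over $x_i\in\mathbb{R}$ (Fubini) then gives zero.

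\textbf{Property (ii).} Apply the change of variables $(x_1,x_2)\mapsto(x_2,x_1)$. This maps $\partial_{x_1}^{\alpha_1}\partial_{x_2}^{\alpha_2}G$ to $\partial_{x_1}^{\alpha_2}\partial_{x_2}^{\alpha_1}G$, using $G(x_2,x_1)=G(x_1,x_2)$. It also maps $x_1^{\beta_1}x_2^{\beta_2}$ to $x_1^{\beta_2}x_2^{\beta_1}$. The Jacobian has absolute value one, so the identity follows.

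\textbf{Property (iii).} Assume $\alpha_1\ge1$. Integrate by parts once in $x_1$, moving one $\partial_{x_1}$ from $\partial^{\alpha}G$ onto $x_1^{\beta_1}$. The boundary terms vanish by compact support, and we obtain
\[
H^{(\alpha_1,\alpha_2)}_{(\beta_1,\beta_2)}=-\beta_1\int_{\mathbb{R}^2}\partial_{x_1}^{\alpha_1-1}\partial_{x_2}^{\alpha_2}G\,x_1^{\beta_1-1}x_2^{\beta_2}\,{\rm d}\bx=-\beta_1H^{(\alpha_1-1,\alpha_2)}_{(\beta_1-1,\beta_2)}.
\]
When $\beta_1=0$ the right-hand side is read as $0$. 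This is consistent with the direct computation: the integral of $\partial_{x_1}(\cdot)$ over the whole line vanishes.

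\textbf{Property (iv).} Iterate (iii) $\beta_1+1$ times, assuming $\alpha_1>\beta_1$. After $\beta_1$ steps the $x_1$-exponent reaches $0$ while at least one $x_1$-derivative remains. One further step produces the factor $0$, equivalently the integral of a total $x_1$-derivative of a compactly supported function. The case $i=2$ follows the same way, or by combining with (ii).

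The only delicate point is the bookkeeping of the edge case $\beta_1=0$ in (iii) and justifying that the boundary terms vanish. Both are immediate under compact support, and for the Gaussian they follow from its decay. These properties give the triangular structure. Ordering multi-indices by total degree, (iv) forces $H^{\alpha}_{\beta}=0$ whenever $\alpha\not\le\beta$ componentwise. For $\alpha=\beta$, applying (iii) repeatedly gives $H^{\beta}_{\beta}=(-1)^{|\beta|}\beta!\int G\neq0$, so back substitution is valid.
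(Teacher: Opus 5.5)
Your proof is correct and follows essentially the same route as the paper: integration by parts in $x_1$ for (iii), and for (iv) iterating (iii) until the $x_1$-exponent reaches $0$, then using that a total $x_1$-derivative of a compactly supported function integrates to zero (the paper's assertion for $\alpha_i\neq 0,\ \beta_i=0$), with (ii) covering $i=2$. Your parity and coordinate-swap arguments fill in what the paper calls straightforward for (i) and (ii), and your explicit handling of the edge cases $\alpha_1\ge 1$ and $\beta_1=0$ in (iii) is a useful clarification.
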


\vspace{0.5em}

\begin{proof}
It is straightforward to verify that properties $(i) $ and $(ii)$ hold.
For property $(iii)$, using integration by parts,
we have
\beas
H^{(\alpha_1, \alpha_2)}_{(\beta_1, \beta_2)} &=& \int_{-\infty}^{\infty} \left[  \int_{-\infty}^{\infty} \frac{\partial^{(\alpha_1 + \alpha_2)} G(\bx)}{\partial x_1 ^{\alpha_1} x_2 ^{\alpha_2}} x_1^{\beta_1}   {\rm{d}} x_1 \right] x_2^{\beta_2} ~ {\rm{d}} x_2 \\[0.5em]
  & = & \int_{-\infty}^{\infty}
\left[  - \beta_1 \int_{-\infty}^{\infty} \frac{\partial^{(\alpha_1 + \alpha_2-1)} G(\bx)}{\partial x_1 ^{\alpha_1-1} x_2 ^{\alpha_2}} x_1^{\beta_1-1} ~  {\rm{d}} x_1 \right] x_2^{\beta_2} ~ {\rm{d}} x_2 = - \beta_1 H^{(\alpha_1-1, \alpha_2)}_{(\beta_1-1, \beta_2)}.
\eeas
For property $(iv)$, due to the fact that  function $G(\bx)$ is compactly supported, we have the following assertion:
\begin{equation} \label{assertion} \text{If}~ \alpha_i \not = 0, \beta_i =0 ~~  \text{for some} ~ i, \text{then}~  H^{(\alpha_1, \alpha_2)}_{(\beta_1, \beta_2)} = 0.
\end{equation}
Without loss of generality, we assume that  $\alpha_1 \not = 0$, $\beta_1 = 0$, then
\beas
H^{(\alpha_1, \alpha_2)}_{(\beta_1, \beta_2)} = \int_{-\infty}^{\infty} \left[  \int_{-\infty}^{\infty} \frac{\partial^{(\alpha_1 + \alpha_2)} G(\bx)}{\partial x_1 ^{\alpha_1} x_2 ^{\alpha_2}}    {\rm{d}} x_1 \right] x_2^{\beta_2}~  {\rm{d}} x_2 = 0.
\eeas
Combining property $(iii)$ and \eqref{assertion}, we see that if $\alpha_1 > \beta_1 $,
\beas
H^{(\alpha_1, \alpha_2)}_{(\beta_1, \beta_2)} = (-1)^{\beta_1} \beta_1 !  ~  H^{(\alpha_1-\beta_1, \alpha_2)}_{(0, \beta_2)} =0.
\eeas
Obviously, using the  symmetry property $(ii)$, we have  $H^{(\alpha_1, \alpha_2)}_{(\beta_1, \beta_2)} = 0$ if  $\alpha_2 > \beta_2 $.
\end{proof}

\begin{thm} \label{LowTriangle}
The matrix $H^{\balpha}_{\bbeta} = \int_{\mathbb{R}^2} \frac{\partial^{(\alpha_1 + \alpha_2)} G(\bx)}{\partial x_1 ^{\alpha_1} x_2 ^{\alpha_2}} x_1^{\beta_1} x_2^{\beta_2}  {\rm{d}} x_1  {\rm{d}} x_2 $
is lower triangular if function  $G(\bx)$  is symmetric and compactly supported.
\end{thm}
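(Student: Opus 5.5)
The plan is to read ``lower triangular'' with respect to an ordering of multi-indices that is compatible with the componentwise partial order. Concretely, order the multi-indices $\balpha=(\alpha_1,\alpha_2)$ with $|\balpha|\le m$ first by total degree $|\balpha|$, and within a fixed degree by any fixed rule, for instance lexicographically in $\alpha_1$. Rows are indexed by $\bbeta$ and columns by $\balpha$.

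The key input is Proposition~\ref{PropMatrix}(iv). It states that $H^{\balpha}_{\bbeta}=0$ whenever $\alpha_i>\beta_i$ for some $i$. Hence a nonzero entry forces $\balpha\le\bbeta$ componentwise, that is, $\alpha_1\le\beta_1$ and $\alpha_2\le\beta_2$.

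The argument then proceeds in two steps.

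\textbf{Step 1.} Show that $\balpha\le\bbeta$ componentwise implies that $\balpha$ precedes or equals $\bbeta$ in the chosen ordering. If $|\balpha|<|\bbeta|$, this holds by the degree ordering. If $|\balpha|=|\bbeta|$, then the componentwise inequalities force $\balpha=\bbeta$.

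\textbf{Step 2.} Conclude the triangular structure. Any nonzero entry $H^{\balpha}_{\bbeta}$ has column index $\balpha$ not exceeding row index $\bbeta$, so every entry above the diagonal vanishes. This is exactly lower triangularity.

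I would also record the diagonal entries. Applying the recurrence (iii) repeatedly in each coordinate, together with symmetry (ii), gives
\[
H^{\balpha}_{\balpha}=(-1)^{|\balpha|}\,\balpha!\,H^{(0,0)}_{(0,0)}=(-1)^{|\balpha|}\,\balpha!\int_{\mathbb{R}^2}G\,{\rm d}\bx\neq0 .
\]
Strictly speaking, this is needed for invertibility rather than for triangularity. Including it, however, justifies the back substitution mentioned before Proposition~\ref{PropMatrix}.

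The main subtlety is not computational but conceptual: triangularity is tied to the ordering of the multi-indices. The ordering by total degree is what makes the componentwise partial order embed into a total order. I also note that the hypothesis ``compactly supported'' is used only through Proposition~\ref{PropMatrix}, namely to kill boundary terms and to make $\int\partial^{\alpha_i}_{x_i}G\,{\rm d}x_i=0$. The same conclusion holds for the Gaussian, since it decays rapidly enough for those integrations by parts to remain valid.
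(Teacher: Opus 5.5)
Your proof is correct and follows the paper's route: both reduce to Proposition~\ref{PropMatrix}(iv). The paper shows directly that $|\balpha|\ge|\bbeta|$ with $\balpha\ne\bbeta$ forces some $\alpha_i>\beta_i$, which is the contrapositive of your Step~1, and it leaves the degree ordering implicit where you state it explicitly. Your diagonal formula is a correct aside, but $\int_{\mathbb{R}^2} G\,{\rm d}\bx\neq 0$ is an extra assumption beyond ``symmetric and compactly supported''; it holds for the normalized Gaussian, where the integral equals $1$.
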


\vspace{0.5em}

\begin{proof}
It is sufficient to show that $H^{(\alpha_1, \alpha_2)}_{(\beta_1, \beta_2)} = 0$ if  $\alpha_1 + \alpha_2 \ge \beta_1 + \beta_2$ and $(\alpha_1, \beta_1)\not = (\alpha_2, \beta_2)$, which can be proved by  $(iv)$ in Proposition \ref{PropMatrix}.
 To be specific,
if $\alpha_1 > \beta_1$, it follows from  $(iv)$  that  $H^{(\alpha_1, \alpha_2)}_{(\beta_1, \beta_2)} = 0$.  Otherwise,  if $\alpha_1 \le \beta_1$, which means that $\alpha_2 > \beta_2$, then  $H^{(\alpha_1, \alpha_2)}_{(\beta_1, \beta_2)} = 0$ by  $(iv)$ .
\end{proof}

\begin{table}[!htbp]
\label{AppCoeff2D}
\renewcommand\arraystretch{1.4}
\centering
\caption{Coefficients $\gamma_{\balpha}$ with $|\balpha| = 0, 1, 2, 3, 4$ for the 2D case.}
\resizebox{.99\columnwidth}{!}{
\begin{tabular}{lllll}
\toprule
$ \gamma_{00}=P_{00}$  &  &  &  &    \\
\midrule
$\gamma_{01} = -P_{01}$  & $\gamma_{10} = -P_{10}$  &  &  &    \\
 \midrule
$\gamma_{02} = \frac{1}{2} (P_{02} - \sigma^2 \gamma_{00})$  & $\gamma_{11} = P_{11}$  & $\gamma_{20} = \frac{1}{2} (P_{20} - \sigma^2 \gamma_{00})$  &  &    \\
  \midrule
$\gamma_{03} = \frac{1}{6}(-P_{03}- 3 \sigma^2 \gamma_{01})$   & $\gamma_{12} = \frac{1}{2}(-P_{12} - \sigma^2 \gamma_{10})$  & $\gamma_{21} = \frac{1}{2}(-P_{21} - \sigma^2 \gamma_{01})$  & $\gamma_{30} = \frac{1}{6}(-P_{30}- 3 \sigma^2 \gamma_{10})$ &    \\
 \midrule
$\gamma_{04} = \frac{1}{24} (P_{04} - 3 \sigma^4 \gamma_{00} -12 \sigma^2 \gamma_{02})$  & $\gamma_{13} = \frac{1}{6} (P_{13}-3 \sigma^2 \gamma_{11})$  & $\gamma_{22} =\frac{1}{4}(P_{22}-\sigma^4 \gamma_{00}-2 \sigma^2 \gamma_{20} - 2\sigma^2 \gamma_{02})$  & $\gamma_{31} = \frac{1}{6} (P_{31}-3 \sigma^2 \gamma_{11})$ &  \\
 $\gamma_{40} = \frac{1}{24} (P_{40} - 3 \sigma^4 \gamma_{00} -12 \sigma^2 \gamma_{20})$ &&&&\\
\bottomrule
\end{tabular}}
\end{table}

\begin{table}[!htbp]
\label{AppCoeff3D}
\renewcommand\arraystretch{1.4}
\centering
\caption{Coefficients $\gamma_{\balpha}$ with $|\balpha| = 0, 1, 2, 3$ for the 3D case..}
\resizebox{.99\columnwidth}{!}{
\begin{tabular}{llll}
\toprule
$ \gamma_{000}=P_{000}$  &  &  &      \\
\midrule
$\gamma_{001} = -P_{001}$  & $\gamma_{010} = -P_{010}$  & $\gamma_{100} = -P_{100}$ &      \\
 \midrule
$\gamma_{002} = \frac{1}{2} (P_{002} - \sigma^2 \gamma_{000})$  & $\gamma_{011} = P_{011}$  & $\gamma_{020} = \frac{1}{2} (P_{020} - \sigma^2 \gamma_{000})$  & $\gamma_{101} =P_{101}$         \\
$\gamma_{110} = P_{110} $ & $\gamma_{200} =\frac{1}{2} (P_{200} - \sigma^2 \gamma_{000}) $ & & \\
  \midrule
$\gamma_{003} = \frac{1}{6}(-P_{003}- 3 \sigma^2 \gamma_{001})$   & $\gamma_{012} = \frac{1}{2}(-P_{012} - \sigma^2 \gamma_{010})$  & $\gamma_{021} = \frac{1}{2}(-P_{021} - \sigma^2 \gamma_{001})$  &   $\gamma_{030} = \frac{1}{6}(-P_{030}- 3 \sigma^2 \gamma_{010})$   \\
$\gamma_{102} = \frac{1}{2}(-P_{102} - \sigma^2 \gamma_{100}) $  &$\gamma_{111} = -P_{111} $& $\gamma_{120} = \frac{1}{2}(-P_{120} - \sigma^2 \gamma_{100}) $ & $\gamma_{201} = \frac{1}{2}(-P_{201} - \sigma^2 \gamma_{001})$ \\
  $\gamma_{210} = \frac{1}{2}(-P_{210} - \sigma^2 \gamma_{010})$& $\gamma_{300} = \frac{1}{6}(-P_{300}- 3 \sigma^2 \gamma_{100})$ && \\
\bottomrule
\end{tabular}}
\end{table}

\section{Analytical expression  $W(\bx)$} \label{anlyticalExpr}

\begin{itemize}
\item
\textbf{The d-dimensional Poisson potential.}
 \beas
 W(\bx)=\left\{\ba{ll}
-\frac{1}{4 \pi} \left[\operatorname{E}_1\big(\frac{r^2}{2 \sigma^2}\big) + 2 \ln(r) \right], ~~ \qquad  d=2, \\
\\
 \frac{1}{4 \pi |\bx|} \operatorname{Erf}\left(\frac{r}{\sqrt{2} \sigma}\right),~~~~~~~~~~~~~~ \qquad d=3.
\ea\right.
\eeas
Here, $\operatorname{E}_1(r)=\int_r^{\infty} t^{-1} e^{-t} \rm{d}t$ is the exponential integral function  and  $\operatorname{Erf}(r)= \frac{2}{\sqrt{\pi}} \int_0^r e^{-t^2} \rm{d}t $ is the error function \cite{Handbook, atkm}.
\item
\textbf{The 2D Coulomb potential.}
\beas
 W(\bx)=\frac{1}{2\sqrt{2 \pi} \sigma} \operatorname{I}_0 \left(\frac{r^2}{4 \sigma^2}\right)e^{-\frac{r^2}{4 \sigma^2}},
\eeas
where  $\operatorname{I}_0(r)$ is the modified Bessel function of order zero \cite{Handbook,atkm}.
\item
\textbf{The d-dimensional Biharmonic potential.}
 \beas
 W(\bx)=\left\{\ba{ll}
 \frac{1}{8 \pi} \left[ r^2+ e^{-\frac{r^2}{\sigma^2}} \sigma^2 \right]+
\frac{1}{16} (r^2+2 \sigma^2) \left[ \operatorname{Ei}( -\frac{r^2}{2 \sigma^2}  ) -2 \ln(r) \right], ~~ \quad d=2, \\
\\
 \frac{1}{8 \pi} \left[ \operatorname{Erf} \left(\frac{r}{\sqrt{2} \sigma} \right)(\frac{\sigma^2}{r}+r)+
\sigma \sqrt{\frac{2}{\pi}} e^{-\frac{r^2}{2 \sigma^2}}
 \right],~~~~~~~~\qquad \qquad \qquad d=3.
\ea\right.
\eeas
Here,  $\operatorname{Ei}(r):= \int_{-\infty}^{r} t^{-1}e^t \rm{d}t$ is the exponential integral \cite{Handbook, atkm}.
\end{itemize}

\end{document}

%% file: command.tex
\newcommand{\be}{\begin{equation}}
\newcommand{\ee}{\end{equation}}
\newcommand{\ba}{\begin{array}}
\newcommand{\ea}{\end{array}}
\newcommand{\bea}{\begin{eqnarray}}
\newcommand{\eea}{\end{eqnarray}}
\newcommand{\beas}{\begin{eqnarray*}}
\newcommand{\eeas}{\end{eqnarray*}}

\newcommand{\bx}{{\mathbf x}}
\newcommand{\by}{{\mathbf y}}

\newcommand{\bp}{{\mathbf p}}
\newcommand{\im}{i}

\newcommand{\balpha}{{\bm \alpha}}
\newcommand{\bbeta}{{\bm \beta}}

\newcommand{\bh}{{\textbf h}}

\newcommand{\bk}{{\textbf k}}
\newcommand\bR{\mathbf{R}}

\renewcommand{\theequation}{\arabic{section}.\arabic{equation}} %
\newtheorem{exmp}{Example}
\newtheorem{remark}{Remark}[section]
\newtheorem{prop}{Proposition}
\newtheorem{thm}{Theorem}
\newtheorem{lem}{Lemma}

